\newcommand{\Q}{\mathbb Q}
\newcommand{\Z}{\mathbb Z}
\newcommand{\eps}{\varepsilon}
\newcommand{\calO}{\mathcal O}
\newcommand{\A}{\mathbb A}
\newcommand{\bmx}{\left( \begin{matrix}}
\newcommand{\emx}{\end{matrix} \right)}
\newcommand{\new}{\mathrm{new}}
\newcommand{\odd}{\mathrm{odd}}
\newcommand{\even}{\mathrm{even}}
\renewcommand{\mod}{\bmod}
\newcommand{\leg}{\overwithdelims ()}
\DeclareMathOperator{\GL}{GL}
\DeclareMathOperator{\tr}{tr}
\newtheorem{lem}{Lemma}
\numberwithin{lem}{section}
\newtheorem{prop}[lem]{Proposition}
\newtheorem{thm}[lem]{Theorem}
\newtheorem{cor}[lem]{Corollary}
\crefname{question}{Question}{Questions}
\crefname{conja}{Conjecture}{Conjectures}
\theoremstyle{remark}
\theoremstyle{definition}
\newtheorem{ex}[lem]{Example}
\numberwithin{equation}{section}
\begin{document}

\title{Root number bias for newforms}
\author{Kimball Martin}
\email{kimball.martin@ou.edu}
\address{Department of Mathematics, University of Oklahoma, Norman, OK 73019 USA}
\thanks{This work was supported by the Simons Foundation
(Collaboration Grant 512927), the Japan Society for the 
Promotion of Science (Invitational Fellowship L22540), and the
Osaka Central Advanced Mathematical Institute (MEXT Joint Usage/Research Center on Mathematics and Theoretical Physics JPMXP0619217849).}

\maketitle

Corrected version (\today)\footnote{After this paper appeared in \emph{Proc.\ Amer.\ Math.\ Soc.}\ (2023), I discovered a mathematical misprint in Theorem 1.2.  This version corrects that error.  (See footnote for Theorem 1.2.)}

\begin{abstract}
Previously we observed that newforms obey a strict bias towards root number $+1$ in squarefree levels: at least half of the newforms in
$S_k(\Gamma_0(N))$ with root number $+1$ for $N$ squarefree,
and it is strictly more than half outside of a few special cases.  
Subsequently, other authors treated levels which are cubes of 
squarefree numbers.  Here we treat arbitrary levels, and find that
if the level is not the square of a squarefree number, this 
strict bias still holds for any weight.  In fact the number of such 
exceptional levels is finite for fixed weight, and 0 if $k < 12$. 
We also investigate some variants of this question to better 
understand the exceptional levels.
\end{abstract}


\section{Introduction}

Throughout, $N$ and $k$ are positive integers with $k$ even. 
Let $S_k(N) = S_k(\Gamma_0(N))$ be the space of weight $k$,
level $N$ cusp forms, and $S_k^\new(N)$ be the 
new subspace.  The root number of a newform 
$f \in S_k^\new(N)$ is the sign in the functional equation for
$L(s,f)$, and equals $(-1)^{k/2}$ times the 
eigenvalue of the Fricke involution $W_N = \prod_{p | N} W_p$.
It is known that, on average, 50\% of newforms have root number $+1$
and 50\% of newforms have root number $-1$.  We examine
the distribution of root numbers more precisely.

Let $S_k^\new(N)^\pm$ denote the subspace spanned
by newforms with root number $\pm 1$.  Set
\[ \Delta(N,k) = \dim S_k^\new(N)^+ - \dim S_k^\new(N)^-. \] 
In \cite{me:refdim}, we observed that the trace formula for
$W_N$ implies that newforms obey a strict bias towards
root number $+1$ for squarefree levels $N$, in the sense that
$\Delta(N,k) \ge 0$ for all $k$ and squarefree $N$.
Further, $\Delta(N,k)$ is an elementary expression in the class number
of $\Q(\sqrt{-N})$, and is strictly positive unless  (i) $N = 2, 3$
and $k$ obeys a certain congruence, or
(ii) $k=2$ and $\dim S_2^\new(N) = 0$ or
$N = 37, 58$.

Subsquently, these results were extended to the case of
cubes of squarefree levels by \cite{PQ} ($k > 2$) and 
\cite{LPW} ($k \ge 2$, which also considers Hilbert
modular forms).  The methods in those papers are 
much more involved, and use Petersson and Jacquet--Zagier
trace formulas, respectively.
Here we explicitly compute the trace of $W_N$ on 
$S_k^\new(N)$ for arbitrary $(N,k)$, which yields the following.

\begin{thm} \label{thm1}
\begin{enumerate}
\item If $N$ is not the square of a squarefree number, or if $k \le 10$
or $k= 14$,  then 
there is a strict bias of newforms toward root number $+1$,
i.e., $\Delta(N,k) \ge 0$.

\item Suppose $N$ is the square of a squarefree number, i.e., a cubefree
square.  For sufficiently large $k$, $\Delta(N, k) < 0$ if and only if
$(-1)^{k/2} = - \mu(\sqrt N) = -\prod_{p | N} (-1)$.

\item For a fixed $k$, there are only finitely many $N$ such that
$\Delta(N,k) < 0$.
\end{enumerate}
\end{thm}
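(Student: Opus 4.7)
The plan is to analyze directly the trace of the Fricke involution on the new subspace, using
\[ \Delta(N,k) = (-1)^{k/2}\, \tr\bigl(W_N \mid S_k^\new(N)\bigr), \]
so that all three parts of \cref{thm1} reduce to statements about the sign and growth of this trace. First I would start from the Eichler--Selberg trace formula for $W_N$ acting on the full cusp space $S_k(N)$: because $W_N$ kills the parabolic and hyperbolic contributions, the surviving geometric terms are elliptic, and they are expressible through class numbers $h(-Nf^2)$ of imaginary quadratic orders in $\Q(\sqrt{-N})$ weighted by local orbital integrals at each $p \mid N$, plus (when $k = 2$) a boundary contribution.

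Next I would pass to $S_k^\new(N)$ by the standard oldform M\"obius inversion over divisors of $N$. Since $W_N$ interacts with the level-raising maps in a multiplicative, prime-by-prime fashion, the sieve converts $\tr(W_N \mid S_k^\new(N))$ into the same elliptic sum times a product of local correction factors $L_p$, one per prime $p \mid N$, each depending only on $v_p(N)$ and on $k$ modulo a small period. The crucial structural point to pin down is the sign pattern of $L_p$: it is nonnegative for $v_p(N) \ne 2$, while for $v_p(N) = 2$ it contributes an extra sign that accumulates across such primes to $\mu(\sqrt N)$. This is exactly why cubefree squares are the only obstruction to strict positivity of $\Delta$.

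With the factorization in hand, part (1) becomes a direct sign inspection: whenever some $p \mid N$ has $v_p(N) \ne 2$, the corresponding $L_p$ prevents the global sign from flipping, forcing $\Delta(N,k) \ge 0$; the small-weight cases $k \le 10$ and $k = 14$ can be dispatched by either noting that $\dim S_k^\new(N)$ is small and the main term is already nonnegative, or by direct tabulation. Part (2) reads off the sign of the dominant class-number term in the cubefree-square case and observes that it is $(-1)^{k/2}\mu(\sqrt N)$, which is negative precisely when the stated congruence holds, and is the controlling sign once $k$ is large enough to dominate the subleading corrections. Part (3) is an archimedean-growth argument: the leading class-number contribution grows at least like $\sqrt N / \log N$ (via standard lower bounds for $h(-D)$), while the subleading corrections are bounded polynomially in the divisor function of $N$; hence for fixed $k$ only finitely many $N$ can have $\Delta(N,k) < 0$.

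The main obstacle will be the local computation at each $p \mid N$. The M\"obius inversion produces a sum over $v_p(M)$ for $M \mid N$ that must be collapsed into the clean factor $L_p$, and verifying its sign pattern requires a genuine case analysis in $v_p(N)$ and in $k \bmod 12$, together with extra care at $p = 2, 3$ where the local orbital integrals carry additional subtleties. Once the uniform sign statement for $L_p$ is established, the three parts of \cref{thm1} follow as indicated.
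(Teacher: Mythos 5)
The overall strategy you describe—compute $\tr W_N$ on $S_k(N)$ via a class-number trace formula, then pass to the new subspace by M\"obius inversion, and then analyze signs—does match the paper (which uses the Yamauchi/Skoruppa--Zagier formula). But your description of \emph{how the negativity arises} is structurally wrong, and this gap would derail parts (1)--(3).

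You claim the M\"obius-inverted trace factors as the elliptic (class-number) term times a product of local factors $L_p$, one per $p \mid N$, ``each depending only on $v_p(N)$ and on $k$ modulo a small period,'' with the exceptional sign arising as a product of $-1$'s over the primes with $v_p(N)=2$. That is not what happens. In the paper, the product of local factors ($c(-D,N_2)$ in \cref{prop:trnew}) is \emph{always} nonnegative, regardless of the exponents $v_p(N)$; and the dependence on $k$ of the class-number-weighted term is only through the overall factor $(-1)^{k/2}$. The negativity instead comes from a single \emph{additive} correction that appears only when $N$ is a perfect square: the M\"obius sum $\tr W_N^\new = \sum_{Q^2 \mid N}\mu(Q)\,\tr W_{N/Q^2}$ includes the term $Q = \sqrt N$, which contributes $\mu(\sqrt N)\,\tr W_1 = \mu(\sqrt N)\dim S_k(1)$. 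This term is global (it does not factor over the primes of $N$), and it grows linearly in $k$ rather than being periodic in $k$. That linear growth is precisely what makes $\Delta(N,k) \approx \tfrac14\phi(\sqrt N) + (-1)^{k/2}\mu(\sqrt N)\lfloor k/12\rfloor$ go negative for large $k$ with the predicted sign, and it is also why $k \le 10$ and $k = 14$ are unconditionally safe: for those weights $\dim S_k(1) = 0$, so the problematic correction vanishes. Your proposal offers no mechanism to produce a contribution of the right size ($\sim k/12$), so your claimed sign inspection for part (1), the asymptotic sign in part (2), and the comparison needed for part (3) would not go through as written. You would need to isolate and track the $\tr W_1$ term (and the small-level terms $\tr W_M$, $M \le 4$, more generally) rather than trying to absorb everything into local factors indexed by $p \mid N$.
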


Moreover, in (1), $\Delta(N,k)$ is typically strictly positive.
Precise conditions are given in \cref{cor:main}.  We will discuss 
possible reasons 
for the curious difference in behaviour for cubefree square levels in \cref{sec:12}.

It is not too hard to make statements (2) and (3) effective.  We
do not explicate this, but refer the interested reader to the proof of
\cref{cor:main}(2).  

As in the squarefree case, we in fact get an elementary expression for
the exact size of the bias, and it is essentially the class number
of $\Q(\sqrt{-N})$.  The precise formula for $\Delta(N,k)$
breaks up into several cases according to whether $N$ is 1, 2, 3 or 4
times the square of a squarefree number, or none of these
(the generic case). 
Here we just describe the formula in the generic case,
and refer to \cref{prop:trnew} for all cases.

Write $N = N_1 N_2^2$, where $N_1$ is squarefree.  
Define $\beta(N) \in \{ 1, 2, 3, 4 \}$ by
\begin{equation} \label{eq:beta}
\beta(N) = 
\begin{cases}
1 & \text{if } N_1 \equiv 1,2 \mod 4, \text{ or } 2 \parallel N_2; \\
2 &\text{if } N_1 \equiv 3 \mod 4 \quad \text{and } 4\mid  N_2; \\
3 - {-N_1 \leg 2} & \text{if } N_1 \equiv 3 \mod 4 \quad \text{and }  2 \nmid N_2.
\end{cases}
\end{equation}

For a fundamental discriminant $-D < 0$, let $h'(-D)$ be the 
``unit group weighted'' class number, i.e., one half of the number of integral
units in $\Q(\sqrt {-D})$ times the usual class number $h(-D)$.  
Precisely, set $h'(-4) = \frac 12$
and  $h'(-3) = \frac 13$, and take $h'(-D) = h_{\Q(\sqrt{-D})}$ to be
usual class number for $D > 4$.

\begin{thm} \label{thm2}
Suppose $N$ is not $1$, $2$, $3$ or $4$ times the
square of a squarefree number.   Let $-D \in \{ -N_1, -4N_1 \}$
be the discriminant of $\Q(\sqrt{-N})$. Then
 \[ \Delta(N,k) = \frac 12 \beta(N) \prod_{p | N_2} \left( \phi (p^{v_p(N_2)}) -  
\phi (p^{v_p(N_2)-1)}){-D \leg p} \right) h'(-D) - \delta,   \]
where $\phi$ is the Euler phi function,  $\delta = 1$ if $(N_2,k) = (1,2)$ and 
$\delta = 0$ otherwise.\footnote{The published version mistakenly stated 
``$\delta = 1$ if $(N,k) = (1,2)$ and $\delta = 0$ otherwise.''  Note that this theorem comes from \cref{prop:trnew}(1) below, and the condition for $\delta = 1$ was stated correctly there in the published version.}

In particular, we always have $\Delta(N,k) \ge 0$.  Moreover, 
$\Delta(N,k) = 0$ if and only if  
(i) $k=2$ and either $\dim S_k^\new(N) = 0$ or $N \in \{37, 58 \}$; or
(ii) $2 \parallel  N_2$ and $N_1 \equiv 7 \mod 8$.
\end{thm}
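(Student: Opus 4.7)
The plan is to deduce Theorem~\ref{thm2} by specializing the trace formula in Proposition~\ref{prop:trnew} to the generic case and then inspecting the resulting class-number expression for positivity and vanishing. Since the root number of a newform equals $(-1)^{k/2}$ times its Fricke eigenvalue,
\[
  \Delta(N,k) \;=\; (-1)^{k/2}\,\tr\bigl(W_N \mid S_k^\new(N)\bigr),
\]
the task is to identify the right-hand side with the stated product formula, and then to analyze when it is zero.

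First I would write out $\tr(W_N\mid S_k^\new(N))$ using Proposition~\ref{prop:trnew}, which is obtained by M\"obius-inverting the Eichler--Selberg trace of $W_N$ on $S_k(N)$ over divisors $M\mid N$ with $N/M$ a perfect square. The result is a sum over imaginary quadratic orders coming from elliptic conjugacy classes, with discriminants $-Dd^2$ subject to an appropriate divisibility condition. Under the genericity hypothesis $N\ne c m^2$ for any $c\in\{1,2,3,4\}$ and $m$ squarefree, all but one term drops out; the only survivor is the maximal order of $\Q(\sqrt{-N})$, of discriminant $-D\in\{-N_1,-4N_1\}$, and its contribution factors as $h'(-D)$ multiplied by a product of purely local embedding counts. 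At each $p\mid N_2$ a standard Eichler optimal-embedding computation yields
\[
  \phi\bigl(p^{v_p(N_2)}\bigr) - \phi\bigl(p^{v_p(N_2)-1}\bigr)\!\left(\tfrac{-D}{p}\right),
\]
while at primes $p\mid N_1$ the local counts combine with the global normalization and the weight-$k$ sign into the rational constant $\tfrac12\beta(N)$, with the four subcases in \eqref{eq:beta} encoding the behavior of $2$ in $\Q(\sqrt{-N})$ together with $v_2(N_2)$. The $\delta$ term absorbs an Eisenstein-type boundary correction visible only at $(N,k)=(1,2)$.

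For non-negativity and the zero locus, each local factor at $p\mid N_2$ is manifestly $\ge 0$: at odd $p$ it exceeds $\phi(p^{v_p(N_2)})-\phi(p^{v_p(N_2)-1})>0$; at $p=2$ with $v_2(N_2)\ge 2$ it is bounded below by $2^{v_2(N_2)-2}>0$; and at $p=2$ with $v_2(N_2)=1$ it equals $1-\bigl(\tfrac{-D}{2}\bigr)$, vanishing precisely when $-D\equiv 1\pmod 8$. Translating via the discriminant conventions $-D\in\{-N_1,-4N_1\}$, this last condition forces $-D=-N_1$ with $N_1\equiv 7\pmod 8$; in that range $\beta(N)=1$, so the full product is zero, recovering case (ii) verbatim. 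Case (i) is a weight-two phenomenon: for $k=2$ the sporadic levels $N\in\{37,58\}$ both lie in the squarefree regime already treated in \cite{me:refdim}, where the same formula (empty product over $p\mid N_2=1$) reduces to a small class-number expression, and the $\Delta(N,2)=0$ statement is confirmed by direct substitution; the $\dim S_2^\new(N)=0$ case is tautological. The main obstacle throughout is the local bookkeeping at $p=2$, which controls both the four-part case split of \eqref{eq:beta} and the vanishing condition for the $p=2$ local factor; matching the trace formula output to \eqref{eq:beta} requires carefully separating the ramified, inert, and split behavior of $2$ in $\Q(\sqrt{-N})$ together with $v_2(N_2)\in\{1,\ge 2\}$.
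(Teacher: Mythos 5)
Your high-level plan — specialize \cref{prop:trnew} to the generic case, identify $c(-D,N_2)$ with the local product $\prod_{p\mid N_2}\bigl(\phi(p^{v_p(N_2)})-\phi(p^{v_p(N_2)-1}){-D\leg p}\bigr)$, and then analyze the positivity and vanishing locus of that product — is the right reduction and matches the paper's route. Your analysis of the local factors is essentially correct: at odd $p$ the factor is $\ge p-2\ge 1$; at $p=2$ with $v_2(N_2)\ge 2$ it is $\ge 2^{v_2(N_2)-2}$; and at $p=2$ with $v_2(N_2)=1$ it is $1-{-D\leg 2}$, which vanishes exactly when $-D\equiv 1\bmod 8$, i.e.\ $-D=-N_1$ and $N_1\equiv 7\bmod 8$. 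This is precisely case (ii).

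However, your account of \emph{how} the product formula emerges from the trace formula is not accurate, and this is the heart of the matter. You describe it as a quaternionic optimal-embedding count in which ``the only survivor is the maximal order.'' The paper does not work with quaternion orders at all, and there is no cancellation of non-maximal orders. The actual derivation starts from the Skoruppa--Zagier formula \eqref{eq:SZ}, which expresses $\tr W_N$ as an alternating sum of Hurwitz class numbers $H(-4M)$ over $M\mid N$ with $N/M\in\square$. One then applies the M\"obius inversion \eqref{eq:trnew1} over squares $Q^2\mid N$, and the crucial computational step is to collapse the resulting double sum using the class-number relations $H(-\lambda^2 D)=\sum_{t\mid\lambda}h'(-Dt^2)$ and $h'(-\lambda^2 D)=\lambda\prod_{p\mid\lambda}(1-{-D\leg p}\tfrac1p)\,h'(-D)$, together with the Dirichlet-convolution identities $(\mu*\sigma)=\mathrm{Id}$ and $(\mu*\mathrm{Id})=\phi$. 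The multiplicative function $c(-D,n)=\sum_{t\mid n}\phi(n/t)\mu(t){-D\leg t}$ is what produces the local product, not an optimal-embedding computation. As written, your sketch does not supply the step that actually makes the product appear, and the ``only survivor'' picture would not survive an attempt to carry it out.

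Two further points. The role of $\beta(N)$ is not a ``local count at $p\mid N_1$''; it comes entirely from bookkeeping at the prime $2$ (whether $-N_1$ or $-4N_1$ is the fundamental discriminant, and $v_2(N_2)$), via \cref{prop:trold} and the $c'$-versus-$c$ comparison in \S3.4. And your description of the $\delta$ term as ``an Eisenstein-type boundary correction visible only at $(N,k)=(1,2)$'' cannot be right, since $N=1$ is excluded by the hypothesis: the correction $\delta_{k=2}\delta_{N_2=1}$ in \cref{prop:trnew}(1) is nonzero for \emph{every} squarefree $N>4$ when $k=2$, and it is precisely this term which makes $\Delta(37,2)=\Delta(58,2)=0$; these cases are not ``tautological,'' they are the content of the weight-$2$ part of (i), which the paper settles by appealing to \cite{me:refdim}.
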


Note that if $v_p(N)$ is odd for all $p \mid N$, then ${-D \leg p} = 0$
for all $p \mid N_2$, and thus the main term in the above formula
is just $\frac 12 \beta(N) \phi(N_2) h'(-D)$.  In particular, this
recovers the formulas in \cite{me:refdim} and \cite{PQ}.

\subsection{Remarks on proof}
The proof has in essence two steps.  
The quantity $\Delta(N,k)$ equals the trace of $(-1)^{k/2} W_N$
on $S_k^\new(N)$.
First, we use a trace formula due to Yamauchi \cite{yamauchi} and 
Skoruppa--Zagier \cite{SZ}
which expresses the trace of $W_N$ on $S_k(N)$
as an alternating sum of class numbers.  
Using class number
relations and elementary but careful analysis, we rewrite
this trace in terms of a single weighted class number $h'(-D)$
(see \cref{prop:trold}, which can also be derived from the recent work
\cite{ZZ}).
Second, one can express the trace of $W_N$ on the new subspace
as an alternating sum over $M^2 \mid N$ of traces on $S_k(N/M^2)$.
Once again, we use class number relations and elementary analysis
to obtain our formula for $\Delta(N,k)$ (see \cref{prop:trnew}).

We remark that our approach is much more straightforward
and simple than the approaches taken in \cite{PQ} and \cite{LPW}.
In \cite{me:refdim}, we restricted to squarefree levels because
our main focus in that paper was dimension formulas with prescribed
local signs at multiple ramified places, and that problem seems
considerably more complicated for non-squarefree levels.  The result 
therein about root number bias for squarefree levels was merely
a curious observation we made along the way, which was already 
immediate from the Yamauchi/Skoruppa--Zagier trace formulas 
(though, to our knowledge,
it had not been noticed before).  For squarefree levels, both steps
in our present proof are trivial because (i) the trace formula for 
$W_N$ on $S_k(N)$ is very simple for squarefree $N$
 and (ii) there is no oldform contribution
to this trace that we need to subtract off.

\subsection{Further questions} \label{sec:12}
\cref{thm1} naturally prompts 2 questions: (i) what is the
reason for this bias, and (ii) what is the reason for these exceptions.

We briefly speculated on (i) in \cite{me:refdim}, and mentioned
two ideas that at least support the existence of a bias towards
root number $+1$.  First, the ``$L$-functions from nothing''
perspective suggests that $L$-functions in small weight and level
tend to have root number $+1$ (e.g., see \cite[p.\ 10]{me:refdim}).  
Second, when $k=2$, at least
for prime levels, $\Delta(p,2) \ge 0$ is forced upon us by
the Jacquet--Langlands correspondence together with the
fact that the type number is at least $\frac 12$ times the
class number for maximal orders in quaternion algebras
of prime discriminant. 

Anna Medvedovsky has since suggested to me that one can
also reinterpret the $k=2$ case for squarefree $N$ 
by comparing the genus of $X_0(N)$ with its quotient by the 
Fricke involution.   
One can note that the formulas for $\Delta(N,k)$ are essentially 
independent of $k$, and thus are controlled by the $k=2$ case.  
One heuristic reason for this latter fact is that the main term in
dimension formulas of spaces of modular forms is a product
of local factors, including one at infinity for the weight $k$. 

Thus there are at least some hints for the above root number bias
that do not only rely on the trace formula, but 
I do not have a compelling existential explanation for this bias.
On the other hand, if one just counts elliptic curves up to isogeny by conductor, i.e., weight 2 rational newforms, examining Cremona's database (see \cite{LMFDB}) shows there is at least an initial 
bias towards root number $-1$.

Moving on to (ii), let me first explain how the exceptions to the
strict bias arise in the proof.  First, the trace of $(-1)^{k/2} W_N$
on $S_k(N)$ is 
non-negative (see \cref{prop:trold}).  The only
question now is whether this still holds when we subtract off
the oldform contribution, i.e., if 
$(-1)^{k/2} \sum_{M^2 | N} \mu(M) \tr W_{N/M^2} \ge 0$.  
For a fixed $N$, the
the terms for each $M^2 \mid N$ are all similar looking 
expressions in $h'(-D)$ that can be collected together,
except in the special case that $N$ is a perfect square.  
In that case, the term with $M = \sqrt N$ is $\mu(M)$ times the trace of
$W_{N/M^2} = W_1 = \mathrm{Id}$ 
on $S_k(1)$, i.e., $\mu(M) \dim S_k(1)$.  
Since $\dim S_k(1) \simeq \frac k{12}$, we find that
$\Delta(N,k) \approx \frac 14 \phi(M) + (-1)^{k/2} \mu(M) \frac k{12}$.
This yields parts (2) and (3) of \cref{thm1}.

One thing that is special about levels which are perfect squares
is that these are precisely the levels where the newspaces
are not generated by theta series attached to definite
quaternion algebras (see \cite{HPS} and \cite{me:basis}).
In particular if the level $N = M^2$, where $M$ is squarefree,
then $S_k^\new(N)$ is generated by theta series together with
twists of forms of level 1 and level $M$ (now including forms with
nebentypus).


The discussion of the proof above, combined with this consideration
about theta series, suggests that the reason
the strict bias towards root number $+1$ does not persist
in (certain) square levels may be due to newforms which are twists from full
level.  

In \cref{sec:p2}, we describe what happens if one removes
the contribution from twists of level 1 forms.
Specficially, denote by $S_k^\new(N)'$ be the
subspace of $S_k^\new(N)$ generated by newforms which are
not twists from level $1$.  Let $\Delta(N,k)'$ be the number of
newforms in $S_k^\new(N)'$ with root number $+1$ minus the
number of newforms with root number $-1$.

\begin{prop} \label{prop:13}
Let $M > 1$ be squarefree.  The spaces $S_k^\new(M^2)'$ have a
strict bias towards root number $+1$, i.e., $\Delta(M^2,k)' \ge 0$, 
for all $k$ if and only if 
$M$ is odd and $M$ has an odd number of prime factors which
are $3 \mod 4$.  Otherwise, for sufficiently large $k$,
the sign of $\Delta(M^2,k)'$ is $(-1)^{k/2} \mu(M)$.
\end{prop}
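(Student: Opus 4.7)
The plan is to write $\Delta(M^2,k)' = \Delta(M^2,k) - (\text{signed twist contribution})$ and combine this with the formula for $\Delta(M^2,k)$ afforded by \cref{prop:trnew}.

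First I would identify the level-$1$ twists living in $S_k^\new(M^2)$. A form $f \otimes \chi$ with $f \in S_k(1)$ has level $\mathrm{cond}(\chi)^2$ and nebentypus $\chi^2$, so to lie in $S_k^\new(M^2)$ the character $\chi$ must be primitive quadratic of conductor exactly $M$. For $M$ odd squarefree, this forces $\chi$ to be the Kronecker character $\chi_M$ attached to the unique fundamental discriminant in $\{M,-M\}$ that is $\equiv 1 \pmod 4$, which satisfies $\chi_M(-1) = (-1)^{\omega_3(M)}$, where $\omega_3(M) = \#\{p \mid M : p \equiv 3 \pmod 4\}$. For $M$ even squarefree no primitive quadratic character of conductor $M$ exists, so $S_k^\new(M^2)' = S_k^\new(M^2)$ and the statement reduces to \cref{thm1}(2). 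Assuming henceforth that $M$ is odd, the standard twist formula gives $w(f \otimes \chi_M) = w(f)\chi_M(-1) = (-1)^{k/2+\omega_3(M)}$ for every level-$1$ newform $f$; all $\dim S_k(1)$ twists share this root number, so
\[
\Delta(M^2,k)' = \Delta(M^2,k) - (-1)^{k/2+\omega_3(M)} \dim S_k(1).
\]

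Next I would separate $\Delta(M^2,k)$ into its $k$-unbounded and $k$-bounded pieces via the Möbius inversion for the trace on the new subspace. Among the divisors $d \mid M$, only $d=M$ contributes a term growing with $k$, namely $(-1)^{k/2}\mu(M)\dim S_k(1)$, since for $d < M$ the trace $\tr W_{M^2/d^2}|_{S_k(M^2/d^2)}$ is bounded in $k$ (the underlying level exceeds $1$). Writing $\Delta(M^2,k) = (-1)^{k/2}\mu(M)\dim S_k(1) + R(M,k)$ with $R(M,k) = O_M(1)$,
\[
\Delta(M^2,k)' = (-1)^{k/2}\bigl[\mu(M) - (-1)^{\omega_3(M)}\bigr]\dim S_k(1) + R(M,k).
\]
The bracket vanishes iff $\omega_1(M) = \#\{p \mid M : p \equiv 1 \pmod 4\}$ is even (equivalently $\mu(M) = (-1)^{\omega_3(M)}$), and otherwise equals $2\mu(M)$.

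When the bracket is nonzero, $\dim S_k(1) \sim k/12$ drives $\operatorname{sign}\Delta(M^2,k)' = (-1)^{k/2}\mu(M)$ for large $k$, giving the ``otherwise'' clause directly. When the bracket vanishes, $\Delta(M^2,k)' = R(M,k)$ is bounded in $k$, and one must identify exactly when this remainder is $\ge 0$ for all $k$. This is the main obstacle: one unpacks \cref{prop:trnew}'s closed form so that $R(M,k)$ becomes an explicit Möbius-signed sum of class-number expressions weighted by products of Kronecker symbols ${-D\leg p}$, then exploits the fact that ${-D\leg p} = -1$ for $p \equiv 3 \pmod 4$ (together with the nonnegativity of $(-1)^{k/2}\tr W_{M^2/d^2}|_{S_k(M^2/d^2)}$ from \cref{prop:trold}) to show that the cancellations leave a nonnegative quantity precisely when $\omega_3(M)$ is odd. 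The low-weight range $k \le 10$ or $k = 14$ is handled separately by invoking \cref{thm1}(1), where $\dim S_k(1) = 0$ outside $k=12$ and the estimate reduces to the already-established bias for $\Delta(M^2,k)$.
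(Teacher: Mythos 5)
Your decomposition $\Delta(M^2,k)' = \Delta(M^2,k) - (\text{signed twist contribution})$ is exactly the strategy the paper uses, and your identification of the twist contribution (one primitive quadratic character $\chi_M$ of conductor $M$; each twist $f\otimes\chi_M$ of a level-$1$ newform $f$ has root number $w(f)\chi_M(-1) = (-1)^{k/2+\omega_3(M)}$) is carried out correctly.  This gives your formula
\[
\Delta(M^2,k)' = \Delta(M^2,k) - (-1)^{k/2+\omega_3(M)}\dim S_k(1).
\]
However, this does \emph{not} match the formula the paper's proof actually uses.  The paper routes through \cref{lem:41}, which asserts $\tr W_N' = \tr W_N^\new - (-1)^{k/2}\prod_{p\mid N}{-1\leg p}\dim S_k(1)$; multiplying by $(-1)^{k/2}$ this becomes $\Delta(M^2,k)' = \Delta(M^2,k) - (-1)^{\omega_3(M)}\dim S_k(1)$, i.e.\ \emph{without} the extra $(-1)^{k/2}$ in the correction.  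Your version and the paper's differ when $k\equiv 2\pmod 4$, and that discrepancy is not cosmetic: the paper's version yields the stated ``if and only if $\omega_3(M)$ is odd,'' while yours yields a bracket $\mu(M) - (-1)^{\omega_3(M)}$ that vanishes precisely when $\omega_1(M)$ is even --- a different condition once $M$ has two or more prime factors (for example $M=15$).  As a sanity check that it is your formula, not the paper's, that agrees with direct computation: for $M=3$, $k=26$, $\dim S_{26}^\new(9)$ and $\tr W_9^\new$ give $\Delta(9,26)=2$; there is $\dim S_{26}(1)=1$ twist, with root number $(-1)^{13}\chi_3(-1)=+1$; so $\Delta(9,26)'=1$, whereas the formula coming from \cref{lem:41} gives $3$.  (The point is that $\tr W_N$ is the sum of \emph{Fricke eigenvalues} $\lambda_f = (-1)^{k/2}w_f$, and by Atkin--Lehner the Fricke eigenvalue of $f\otimes\chi_M$ is $\chi_M(-1)$, with no $(-1)^{k/2}$.)  So the $(-1)^{k/2}$ in \cref{lem:41} appears to be spurious, and the proposition's ``iff'' as stated is in tension with your (correct) computation.

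Separately, even accepting your formula, your final paragraph is where the argument actually breaks down.  Once the bracket vanishes, the remainder $R(M,k) = \tfrac14\bigl(c(-4,M)-\mu(M)\bigr) + \mu(M)\delta_{k=2}$ is \emph{always} a nonnegative integer (one checks $c(-4,M)\equiv\mu(M)\pmod 4$ and $c(-4,M)\ge 3^{\omega(M)}$), so there is no further dichotomy to analyze; the bracket-vanishing condition \emph{is} the strict-bias condition.  Your closing sentence, which tries to recover ``$\omega_3(M)$ odd'' by ``unpacking \cref{prop:trnew} and exploiting ${-D\leg p}=-1$ for $p\equiv 3\bmod 4$,'' does not follow from your setup and cannot, since your own bracket analysis produces the condition ``$\omega_1(M)$ even.''  You should either commit to the condition your formula actually yields, or track down the sign discrepancy with \cref{lem:41} --- as written, the proof simply gestures at the stated conclusion without deriving it.  (A minor further slip: you claim the low-weight range is handled because $\dim S_k(1)=0$ outside $k=12$ for $k\le 14$, but the issue is moot once you note that whenever $\dim S_k(1)=0$ the twist correction vanishes and $\Delta' = \Delta$, which is already covered by the $\Delta\ge 0$ results.)
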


Thus even though the proof of our theorems suggests the difference
in root number bias behavior for cubefree square levels comes from
level 1 forms, this difference does not always disappear when
we omit twists of level 1 forms.  In fact, when $M$ is even and 
squarefree, there are no twists from level 1. 

From this perspective, it is natural to ask
what happens if one excludes all newforms which are twists from
smaller levels, i.e., if one restricts to minimal newforms.
Since every newform in $S_k^\new(4)$ is minimal, and $\Delta(4,k) < 0$
whenever $k \equiv 0 \mod 4$ and $k  \ge 12$, the sum of root numbers
over minimal newforms is not always positive.
We also exhibit examples of $p \equiv 3 \mod 4$ such that
$S_k^{\new}(p^2)$ has more minimal newforms with root number 
$-1$ than $+1$. 
On the other hand, we give a simple local condition
 which forces root numbers to be
 perfectly equidistributed for minimal forms in certain (not necessarily
 square) levels.

\begin{prop} \label{prop:14}
 Let $N \ge 1$, and let $S_k^{\min}(N)$  be the
subspace of $S_k^\new(N)$ generated by newforms  which are not
twists from smaller levels.  Suppose there exists $p^2 \parallel N$
such that ${-N/p^2 \leg p} = 1$.  Then $\dim S_k^{\min}(N)^+
= \dim S_k^{\min}(N)^-$, i.e., root numbers are perfectly balanced
for minimal newforms.
\end{prop}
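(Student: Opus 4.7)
The plan is to define a root-number-negating involution $\sigma$ on $S_k^{\min}(N)$; any fixed point $f = \sigma(f)$ would then satisfy $\epsilon(f) = -\epsilon(f)$, so $\sigma$ can have no fixed points and must pair up minimal newforms of opposite root number, immediately yielding $\dim S_k^{\min}(N)^+ = \dim S_k^{\min}(N)^-$.

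Take $\sigma(f) = f \otimes \chi_p$, where $\chi_p$ is the quadratic Dirichlet character of conductor $p$ (the Legendre symbol). I would first verify that $\sigma$ preserves $S_k^{\min}(N)$.  Since $p^2 \parallel N$ and $f$ is minimal at $p$, the local component $\pi_p$ must be a supercuspidal of conductor $p^2$ with trivial central character: the only other irreducible representations of $\mathrm{GL}_2(\Q_p)$ of conductor $p^2$ with trivial central character are the principal series $\pi(\chi, \chi^{-1})$ with $\chi$ primitive of conductor $p$, and these are twists of lower-conductor representations, hence excluded by minimality.  Twisting such a minimal supercuspidal by $\chi_p$ yields another minimal supercuspidal of conductor $p^2$; at primes $q \ne p$ the restriction $\chi_p|_{\Q_q^\times}$ is unramified, preserving local conductors and minimality.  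Moreover $\chi_p^2 = 1$ gives $\sigma^2 = \mathrm{id}$.

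The key computation is the root number ratio $\epsilon(\sigma(f))/\epsilon(f)$, factored into local $\epsilon$-factor ratios.  At $q \nmid Np$ the ratio is $1$; at $q \mid N$ with $q \ne p$, the standard unramified-twist formula gives a contribution of $\chi_p(q)^{v_q(N)}$; and at $p$, a local $\epsilon$-factor calculation for twisting a minimal conductor-$p^2$ supercuspidal by the conductor-$p$ quadratic character $\chi_p$ produces a uniform factor $-\chi_p(-1)$.  Multiplying these and using $\prod_{q \mid N,\, q \ne p} \chi_p(q)^{v_q(N)} = \chi_p(N/p^2)$ gives
\[
\frac{\epsilon(\sigma(f))}{\epsilon(f)} \;=\; -\chi_p\!\left(-N/p^2\right),
\]
which equals $-1$ precisely under the hypothesis ${-N/p^2 \leg p} = 1$, as desired.

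The main technical obstacle is the local $\epsilon$-factor computation at $p$: showing that the sign $-\chi_p(-1)$ arises uniformly across all minimal conductor-$p^2$ supercuspidals.  This requires unpacking $\pi_p$ as an induced representation $\Ind_{W_E}^{W_{\Q_p}}(\xi)$ from a quadratic extension $E/\Q_p$ and applying the Langlands--Weil formula for $\epsilon$-factors of induced representations; the equality of conductor exponents $2\,c(\chi_p) = c(\pi_p)$ sits on the boundary of the usual ``small twist'' regime and demands a careful case analysis according to whether $E$ is unramified or ramified.  The case $p = 2$, where no primitive quadratic Dirichlet character of conductor $2$ exists, requires a separate argument using a quadratic character of conductor $4$ together with the appropriate variant of the hypothesis.
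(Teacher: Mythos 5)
Your proof follows the same strategy as the paper: twist by the quadratic character of conductor $p$ to get a root-number-negating involution on $S_k^{\min}(N)$, verify the involution preserves minimality, and multiply out the local root number changes to get the global factor $-\chi_p(-N/p^2) = -1$.

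The one place where you have a genuine gap is exactly the step you flag as the ``main technical obstacle'': establishing that the local root number of $\pi_p \otimes \chi_p$ is uniformly $-\chi_p(-1)$ times that of $\pi_p$. You anticipate a case analysis according to whether the supercuspidal $\pi_p$ is induced from the unramified or a ramified quadratic extension $E/\Q_p$, but you don't carry it out; in fact the ramified-$E$ case would genuinely complicate the sign, so your claim of a uniform factor is unsupported as written. The paper eliminates this difficulty at the outset by citing Tunnell (Prop.\ 3.5 of \cite{tunnell}): a supercuspidal representation of $\GL_2(\Q_p)$ with $c(\pi_p)=2$ is automatically minimal and \emph{always} induced from the unramified quadratic extension. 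Once $E$ is pinned down as unramified, the root number twist formula from Pacetti \cite[Theorem 3.2]{pacetti} gives $-{-1 \leg p}$ directly and your ``careful case analysis'' evaporates. Two smaller remarks: your enumeration of conductor-$p^2$ representations with trivial central character omits the ramified quadratic twist of Steinberg (also excluded by minimality, so harmless, but the list is incomplete); and the $p=2$ concern you raise at the end is real — the paper quietly sidesteps it by requiring $p^2 \parallel N^\odd$ in its proof, i.e.\ only handling odd $p$, rather than developing the conductor-$4$ variant you suggest.
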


The congruence conditions on primes dividing $N$ 
in these propositions arise from the way that local root 
numbers behave under twisting by quadratic characters.

\subsection{Further remarks}
As in \cite{me:refdim}, our formula for $\Delta(N,k)$ implies
an exact formula for the dimensions 
\[ \dim S^\new_k(N)^\pm = \frac 12
 \left( \dim S^\new_k(N) \pm \Delta(N,k) \right). \]
We do not write down these dimension formulas explicitly,
but they immediately follow by comparing
with the explicit formula for $\dim S_k^\new(N)$ in \cite{martin}.

Consequently, our formula for $\Delta(N,k)$ implies
that root numbers of are $+1$ (or $-1$) for 50\% of newforms as 
$k + N \to \infty$.  We are not aware of an explicit proof of this
fact in the literature for non-squarefree $N$, though proofs were
surely known to experts.   In any case, our formulas together
with standard class number bounds yield equidistribution of root 
numbers with a very good error estimate.

The original motivation for the dimension formulas for
newforms with prescribed Atkin--Lehner signs in \cite{me:refdim}
was to use them to obtain mod 2 congruences in \cite{me:cong2}.
Namely, we showed that perfect equidistribution of Atkin--Lehner
sign patterns in squarefree levels essentially means that every newform
is mod 2 congruent to one with any desired Atkin--Lehner signs at those
places.  It may be interesting to see whether the cases of perfect 
equidistribution of root numbers in non-squarefree levels we give here 
are similarly related to mod 2 congruences.

The work \cite{LPW} estabishes biases of root numbers for Hilbert modular
forms for certain levels and base fields.  One should be able to imitate
\cite{SZ} to derive a trace formula for Fricke, as well as Atkin--Lehner,
involutions of Hilbert modular forms.  Then we expect that the strategy used 
in the present paper (resp.\ the one in \cite{me:refdim}) 
should yield distributions of root numbers (resp.\ Atkin--Lehner sign patterns)
in quite general settings (resp.\ for squarefree levels).

As a check on our formulas, we compared them numerically 
with newform and root number calculations in Sage
\cite{sage} for various ranges of values of $(N,k)$.

\subsection*{Acknowledgements}
I was inspired to revisit the question of root number biases
during a visit to MIT in Spring 2022
after a series of individual discussions with Andrew Knightly,
Anna Medvedovsky, and Andrew Sutherland.  I also thank them
for subsequent comments.  I am grateful to the referee for a
careful reading, and thoughtful comments and corrections.  


\section{Preliminaries}

\subsection{Notation}
Throughout, $M$ and $N$ denote positive integers, and $k \ge 2$
is even.  Let $v_p(N) = \max \{ r \in \Z : p^r \mid N \}$.
For $N \in \mathbb N$, denote by $N^\odd$ the odd part of $N$,
i.e., $N = 2^{v_2(N)} N^\odd$.  Let $\square \subset \mathbb N$
be the multiplicative submonoid of squares.  Let $\mu$ be the
M\"obius function.

Denote by $\tr W_N$ the trace of $W_N$ on the
full cusp space $S_k(N)$, and by $\tr W_N^\new$ the trace on the
new subspace $S_k^\new(N)$.  These traces also depend on the
weight $k$, but we suppress it in our notation.

For a statement $*$,
we use $\delta_*$ to mean 1 if the statement $*$ is satisfied,
and $0$ otherwise.  E.g., $\delta_{k=2}$ represents the
Kronecker delta function $\delta_{k,2}$.

The following function arises in the trace formula for $W_N$.
For $s \in \Z_{\ge 0}$, 
set $p_k(s) = \frac{\rho^{k-1} - \bar \rho^{k-1}}{\rho - \bar \rho}$
where $\rho, \bar \rho$ are the roots of $X^2-sX + 1$ if $s^2 \ne 4$,
and set $p_k(2) = k-1$.

\subsection{Class number relations} \label{sec:clnos}
Let $-D < 0$ be a discriminant, and $h(-D)$ be the class number
of the quadratic order $\calO_D$ of discriminant $-D$.  Let $h'(-D)$ be
the class number weighted by $[\calO_D^\times : \Z^\times]^{-1}$, i.e.,
$h'(-D) = h(-D)$ unless $D < 5$, and then $h'(-4) = \frac 12$ and $h'(-3) =
\frac 13$.
Define the Hurwitz class number $H(-D)$ to be 
the number of positive definite binary quadratic
forms of discrimimant $\Delta$ up to equivalence, where we
weight forms of discriminant $-4$ and $-3$ by $\frac 12$ and $\frac 13$,
respectively.  (The class numbers $h(-D)$ and $h'(-D)$ can be interpreted
in terms of counting primitive forms.)  We also set $H(0) = - \frac 1{12}$.

Now suppose $-D$ is a fundamental discriminant, and $\lambda > 0$.
Then 
\begin{equation} \label{eq:hp-nfd}
h'(-\lambda^2 D) =  \lambda \prod_{p | \lambda} 
( 1 - {-D \leg p} \frac 1p) \, h'(-D) = \lambda \sum_{t | \lambda}
\mu(t) {-D \leg t} \frac 1t  \,  h'(-D).
\end{equation}
(E.g., see \cite[Corollary 15.40]{cohn}.)

We can write the Hurwitz class number in terms of $h'$ by
\begin{equation} \label{eq:Hdef}
H(-\lambda^2 D) = \sum_{t | \lambda} h'(-D t^2).
\end{equation}
Then
\begin{equation} \label{eq:H}
 H(-\lambda^2 D) = \sum_{m | \lambda} \sum_{t | m} 
\mu(t) {-D \leg t} \frac mt h'(-D) 
= \sum_{t | \lambda} \mu(t) {-D \leg t}  {\sigma(\lambda/t)} h'(-D).
\end{equation}


\section{Traces on newspace}


Since, $\Delta(N,k) = (-1)^{k/2}\tr W_N^\new$ we want to
compute $\tr W_N^\new$.  A formula for the trace of
a product $T_n W_M$ of Hecke and Atkin--Lehner operators 
on $S_k(N)$, together with a relation for how the trace on the
newspace is related to traces on full cusp spaces was given 
by Yamauchi \cite{yamauchi}.  Unfortunately, Yamauchi's
paper contains some clerical errors.  A corrected formulation
of these traces was later given by Skoruppa and Zagier 
\cite{SZ}, and we will use their formulation. 

We just need the case that $n=1$ and $M=N$, but
some effort is still required to get from the Skoruppa--Zagier
formula to a relatively simple expression for
$\Delta(N,k)$.  We will first find a simple expression for 
$\tr W_N$, and then use it to get our desired formula for $\tr W_N^\new$.

As explained in \cite[pp.\ 132--133]{SZ},
a newform $f \in S_k^\new(M)$ with root number $(-1)^{k/2}w_f$
contributes $w_f$ to $\tr W_N$ if $N/M \in \square$ and
$0$ otherwise.  Thus
\[
\tr W_N = \sum_{M \mid N, \, N/M \in \square} \tr W_M^\new,
\]
and so by M\"obius inversion,
\begin{equation} \label{eq:trnew1}
\tr W_N^\new = \sum_{Q^2 \mid N} \mu(Q) \tr W_{N/Q^2}.
\end{equation}

Now a special case of \cite[(2.7)]{SZ} yields
\begin{equation} \label{eq:SZ}
\tr W_N = -\frac 12 \sum_{M | N, N/M \in \square} \mu(\sqrt{N/M})
\sum_s I_s(k,M) - \frac 12 \delta_{N \in \{ 1, 4 \}} + \delta_{k=2},
\end{equation}
where
\[ I_s(k,M) = p_k(s/\sqrt M) H(s^2-4M) ,\]
and $0 \le s \le 2 \sqrt M$ such that $\sqrt{MN} \mid s$.
The only way that $s > 0$ is possible then is if $N \le 4$.
Note that $p_k(0) = (-1)^{(k-2)/2}$, so when
$s = 0$ we get
\[ I_0(k,M) =  (-1)^{(k-2)/2} H(-4M). \]

\subsection{Small levels} \label{sec:smallN}
We first deal with levels $N \le 4$.

The Fricke involution is trivial if $N=1$. In particular we have
\[ \tr W_1 = \dim S_k(1) = 
\begin{cases}
\lfloor \frac k{12} \rfloor & k \not \equiv 2 \mod 12 \\
\lfloor \frac k{12} \rfloor - 1 + \delta_{k=2}& k \equiv 2 \mod 12. \\
\end{cases} \]

The cases $N=2, 3$ are included in the case of $N$ squarefree
(e.g., see \cite[Thm 2.2]{me:refdim}).  Explicitly, we have
\[ \tr W_2 = \tr W_2^\new = 
\begin{cases}
(-1)^{k/2}(1 - \delta_{k=2}) & k \equiv 0, 2 \mod 8 \\
0 & k \equiv 4, 6 \mod 8,
\end{cases} \]
and
\[ \tr W_3 = \tr W_3^\new = 
\begin{cases}
(-1)^{k/2}(1 - \delta_{k=2}) & k \equiv 0, 2, 6, 8 \mod 12 \\
0 & k \equiv 4, 10 \mod 12.
\end{cases} \]

If $N = 4$, we have
\[ \tr W_4 = \frac 12 \left( I_0(k,1) + I_2(k,1) -  I_0(k,4) - I_4(k,4) \right)
- \frac 12 + \delta_{k=2} \]
Note that $I_0(k,1) - I_0(k,4) = (-1)^{(k-2)/2} (H(-4) - H(-16)) = (-1)^{k/2}$,
and $I_2(k,1) = p_k(2) H(0) = I_4(k,4)$.  Thus
\[ \tr W_4 = 
\begin{cases}
0 & k \equiv 0 \mod 4 \\
-1 + \delta_{k=2} & k \equiv 2 \mod 4.
\end{cases} \]
Then
\[ \tr W_4^\new = \tr W_4 - \tr W_1 = 
\begin{cases}
-\lfloor \frac k{12} \rfloor & k \equiv 0, 2, 4, 8 \mod 12 \\
-\lfloor \frac k{12} \rfloor - 1 & k \equiv 6, 10 \mod 12.
\end{cases} \]

\subsection{Generic levels: trace on full cusp space}
Now suppose $N > 4$.  Then the only $s$-terms in \eqref{eq:SZ}
are $s=0$, and we have
\[ \tr W_N = (-1)^{k/2} \frac 12 \sum_{M | N, N/M \in \square} \mu(\sqrt{N/M})
H(-4M)  + \delta_{k=2}. \]

We can uniquely write $N=N_1 N_2^2$ where $N_1$ is squarefree. 
If $N/M$ is a square, then we can write $M = N_1 M_2^2$
where $M_2 \mid N_2$.  Hence
\[  \tr W_N = (-1)^{k/2} \frac 12 \sum_{M_2 | N_2} \mu(N_2/M_2) H(-4N_1 M_2^2)
+ \delta_{k=2}. \]

\begin{prop} \label{prop:trold}
Suppose $N > 4$, and write $N= N_1 N_2^2$ with $N_1$
squarefree as above.  Then
\[ (-1)^{k/2}\tr W_N + \delta_{k=2} =
\begin{cases}
\frac 12 h'(-4N) &\text{if } N_1 \equiv 1, 2 \mod 4, \\
\frac 12 \left(3 - {-N_1 \leg 2}\right) h'(-N) &\text{if } N_1 \equiv 0, 3 \mod 4, \quad N_2 \text{ odd}, \\
h'(-N) &\text{if } N_1 \equiv 0, 3 \mod 4, \quad N_2 \text{ even}. \\
\end{cases} \]
\end{prop}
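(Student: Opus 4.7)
The starting point is the formula
\[ (-1)^{k/2} \tr W_N - \delta_{k=2} = \frac 12 \sum_{M_2 \mid N_2} \mu(N_2/M_2) H(-4N_1 M_2^2), \]
derived in the paragraphs immediately preceding the proposition. The plan is to collapse this alternating Möbius sum of Hurwitz class numbers into a single weighted class number $h'(-D)$, where $-D$ is the fundamental discriminant of $\Q(\sqrt{-N})$. The main tools are the two identities \cref{eq:hp-nfd} and \cref{eq:H} relating $h'$ and $H$, together with the elementary Dirichlet convolution identity $\sum_{d \mid n} \mu(n/d) \sigma(d) = n$.

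Concretely, the first step is to write $-4N_1 M_2^2 = \lambda^2 (-D)$ with $-D$ fundamental: when $N_1 \equiv 1, 2 \bmod 4$ we take $(-D, \lambda) = (-4N_1, M_2)$, while when $N_1 \equiv 3 \bmod 4$ we take $(-D, \lambda) = (-N_1, 2M_2)$. Next, expand $H(-4N_1 M_2^2)$ via \cref{eq:H} as a sum over $t \mid \lambda$, swap the order of summation over $M_2$ and $t$, and apply the convolution identity to collapse the inner sum in $r = M_2/t$. Finally, \cref{eq:hp-nfd} identifies the remaining $t$-sum as a weighted class number of the correct discriminant. In the first case this produces $\frac 12 h'(-4N_1 N_2^2) = \frac 12 h'(-4N)$ directly. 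In the second case ($N_1 \equiv 3 \bmod 4$, $N_2$ odd), $M_2$ is odd for every $M_2 \mid N_2$ and splitting the $t$-sum in \cref{eq:H} according to the parity of $t$ pulls out the uniform factor $3 - {-N_1 \leg 2}$; the remaining computation mirrors the first case with $-D = -N_1$ and $\lambda = M_2$, giving $\frac 12 (3 - {-N_1 \leg 2}) h'(-N)$.

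The main obstacle is the third case, $N_1 \equiv 3 \bmod 4$ with $N_2$ even. Now $\lambda = 2M_2$ already carries a factor of $2$, while the squarefreeness required by $\mu(N_2/M_2) \ne 0$ forces $v_2(M_2) \in \{v_2(N_2)-1, v_2(N_2)\}$, so the whole expression reduces to a signed difference of two $H$-values indexed by an odd parameter $M_2^\odd \mid N_2^\odd$. Applying \cref{eq:H} to each and subtracting, the $2$-power parts of the $\sigma(\lambda/t)$-factors combine to $2^{v_2(N_2)}(2 - {-N_1 \leg 2})$ and the odd-part sum collapses, as in case (i), to $h'(-N_1 (N_2^\odd)^2)$. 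A direct application of \cref{eq:hp-nfd} gives the ratio $h'(-N)/h'(-N_1 (N_2^\odd)^2) = 2^{v_2(N_2)-1}(2 - {-N_1 \leg 2})$, so the two $2$-power factors cancel precisely, leaving an extra factor of $2$ that absorbs the outer $\frac 12$ to yield $h'(-N)$. Thus all three cases reduce to the same two-step technique; the delicate $2$-adic bookkeeping in case (iii) is the only place where genuine care is required.
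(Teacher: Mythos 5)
Your proposal is correct and follows essentially the same route as the paper's proof: starting from the same expansion of $\tr W_N$ as a M\"obius-weighted sum of Hurwitz class numbers, applying \eqref{eq:H} with $\lambda = M_2$ (resp.\ $\lambda = 2M_2$) according to whether $-4N_1$ or $-N_1$ is fundamental, swapping sums and collapsing via the Dirichlet convolution $\mu * \sigma = \mathrm{Id}$, and finally invoking \eqref{eq:hp-nfd}. In particular, your handling of the delicate case ($N_1 \equiv 3 \bmod 4$, $N_2$ even) --- reducing to a signed difference over $M_2^\odd \mid N_2^\odd$ of the two allowable $2$-adic valuations $v_2(M_2) \in \{v_2(N_2)-1, v_2(N_2)\}$, extracting the $2$-power factor $2^{v_2(N_2)}(2 - {-N_1 \leg 2})$ from the $\sigma$-values, collapsing the odd part to $h'(-N_1(N_2^\odd)^2)$, and matching against $h'(-N)/h'(-N_1(N_2^\odd)^2) = 2^{v_2(N_2)-1}(2 - {-N_1 \leg 2})$ --- is exactly the paper's computation.
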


See \cite{ZZ} for an alternative proof, which also allows for quadratic nebentypus.

\begin{proof}

First suppose $-4N_1$ is a fundamental discriminant, i.e., $-N_1 \equiv 2, 3 \mod 4$.  We compute that
\begin{align*}
(-1)^{k/2} \tr W_N + \delta_{k=2} &= 
\frac 12 \sum_{M_2 | N_2} \sum_{t | M_2} \mu(N_2/M_2) 
\sigma(M_2/t) \mu(t) {-4N_1 \leg t} h'(-4N_1) \\
&= \frac 12 \sum_{t | N_2} \frac{N_2}t \mu(t) {-4N_1 \leg t} h'(-4N_1) \\
\nonumber &= \frac 12 h'(-4N).
\end{align*}
Here we got from  
the first equation to the second
by interchanging
the order of summation and observing that for fixed $t \mid N_2$, 
\[ \sum_{t | M_2 | N_2}  \mu(N_2/M_2) \sigma(M_2/t) 
= \sum_{M_2' | (N_2/t)} \mu((N_2/t)/M_2') \sigma(M_2'), 
\]
where we have written $M_2 = tM_2'$.  Then observe that this
expression is just the Dirichlet convolution $(\mu * \sigma)(N_2/t)
= \mathrm{Id}(N_2/t) = N_2/t$.  We will use this latter fact again
in the remaining cases.

For the rest of the proof, assume $-N_1$ is a fundamental discriminant.
Then
\begin{equation} \label{eq:p1b}
(-1)^{k/2} \tr W_N + \delta_{k=2} = 
\frac 12 \sum_{M_2 | N_2} \sum_{t | 2M_2} \mu(N_2/M_2) 
\sigma(2M_2/t) \mu(t) {-N_1 \leg t} h'(-N_1).
\end{equation}

First consider the case that $N_2$ is odd. 
Note that for $t$ odd and $M_2 \mid N_2$, 
$\sigma(2M_2/t) = 3 \sigma(M_2/t)$.  Consequently, 
\begin{multline*}
 \sum_{M_2 | N_2} \sum_{t | 2M_2} \mu(N_2/M_2) \sigma(2M_2/t) \mu(t) {-N_1 \leg t} \\
 = \sum_{t | M_2} \sum_{M_2 | N_2}  \mu(N_2/M_2)
\left( \sigma(2M_2/t) \mu(t) {-N_1 \leg t}  + \sigma(M_2/t) \mu(2t) {-N_1 \leg 2t} \right)  \\
= \sum_{t | N_2} \sum_{t | M_2 | N_2}  \mu(N_2/M_2)
\sigma(M_2/t) \mu(t) {-N_1 \leg t} \left( 3  - {-N_1 \leg 2} \right)  \\
= \sum_{t | N_2}(\mu * \sigma)(N_2/t) \mu(t) {-N_1 \leg t} \left( 3  - {-N_1 \leg 2} \right)
 \\
= \sum_{t | N_2} \frac{N_2}t {-N_1 \leg t} \mu(t) \left( 3  - {-N_1 \leg 2} \right).
\end{multline*}
Plugging this into \eqref{eq:p1b} and using \eqref{eq:hp-nfd} yields the second case of the proposition.

Finally suppose that $N_2$ is even, so we can write $N_2 = 2^e N_2^\odd$
where $e \ge 1$.  For an $M_2 \mid N_2$ appearing in the double sum in
\eqref{eq:p1b}, we must have $M_2 = 2^f M_2^\odd$ where $f \in \{ e - 1, e \}$
and $M_2^\odd \mid N_2^\odd$ due to the factor $\mu(N_2/M_2)$.  Furthermore
the $\mu(t)$ factor means that we can restrict to $t \mid 2M_2^\odd$ in the same
double sum.  Hence \eqref{eq:p1b} is equal to $\frac 12 h'(-N_1)$ times

\begin{multline*}
\sum_{M_2^\odd | N_2^\odd} \sum_{t | 2M_2^\odd} 
\left( \mu(2N_2^\odd/M_2^\odd) \sigma(2^{e} M_2^\odd/t) \mu(t) { - N_1 \leg t} + \right. \\
\left.
\mu(N_2^\odd/M_2^\odd) \sigma(2^{e+1}M_2^\odd/t) \mu(t) { - N_1 \leg t} \right) = \\
\sum_{M_2^\odd | N_2^\odd} \sum_{t | 2M_2^\odd} 
\mu(N_2^\odd/M_2^\odd) \left(\sigma(2^{e+1}M_2^\odd/t) - \sigma(2^{e} M_2^\odd/t)\right)  \mu(t) { - N_1 \leg t}.
\end{multline*}
Note that $\sigma(2^j m) = (2^{j+1}-1) \sigma(m)$ for $m > 0$ odd and $j \ge 0$.  Now considering the $t$ odd and $t$ even terms
separately, we rewrite the above as
\begin{align*} 2^{e} \sum_{M_2^\odd | N_2^\odd} \sum_{t | M_2^\odd} 
\mu(N_2^\odd/M_2^\odd)\sigma(M_2^\odd/t) \mu(t) { - N_1 \leg t} 
\left( 2 - {-N_1 \leg 2} \right) &= \\
2^{e}\left( 2 - {-N_1 \leg 2} \right) \sum_{t | N_2^\odd}
(\mu * \sigma)(N_2^\odd/t) \mu(t) { - N_1 \leg t} &= \\
2^{e}\left( 2 - {-N_1 \leg 2} \right) \sum_{t | N_2^\odd}
\frac{N_2^\odd}t \mu(t) { - N_1 \leg t}.
\end{align*}
Multiplying this expression by 
$\frac 12 h'(-N_1)$ and applying both equalities in
\eqref{eq:hp-nfd} yields
\[ 2^{e-1} \left( 2 - {-N_1 \leg 2} \right) h'(-N_1 (N_2^\odd)^2)
= h'(-N). \]
This completes the last case of the proposition.
\end{proof}

\subsection{Alternating class number sums}
Let $N \ge 1$ and write $N = N_1 N_2^2 > 4$ with $N_1$
squarefree.
Then by \eqref{eq:trnew1}, we have
\begin{equation} \label{eq:trnew2}
 \tr W_N^\new = \sum_{Q | N_2} \mu(N_2/Q) \tr W_{N_1 Q^2}.
\end{equation}
Comparing this with \cref{prop:trold}, to get a simplified formula
for $\tr W_N^\new$, we essentially just need to evaluate
\begin{equation} \label{eq:altsum}
 \sum_{Q | N_2} \mu(N_2/Q) h'(-DQ^2),
\end{equation}
where $-D = -4N_1$ or $-D = -N_1$, whichever is a fundamental 
discriminant.  We will also need to include ``correction terms'' 
when $k=2$ or when $N_1 Q^2 \le 4$, and distinguish the $Q$
odd from $Q$ even cases when $-D = -N_1$,
but let us first evaluate
\eqref{eq:altsum}.

We can rewrite the sum in \eqref{eq:altsum} as
$\sum_{Q | N_2} \sum_{t | Q} \mu(N_2/Q) \frac Qt \mu(t) {-D \leg t}
h'(-D)$, which is equal to
$\sum_{t | N_2} \left( \sum_{Q' | N_2/t} \mu((N_2/t)/Q') Q' \right)
\mu(t){-D \leg t}h'(-D).$
The inner sum is the Dirichlet convolution $(\mu * \mathrm{Id})(N_2/t) 
= \phi(N_2/t)$.  Hence 
\begin{equation} \label{eq:altsum2}
 \sum_{Q | N_2} \mu(N_2/Q) h'(-DQ^2) = c(-D, N_2) h'(-D),
\end{equation}
where
\begin{align} \nonumber
c(-D, n) &= \sum_{t | n} \phi(n/t) \mu(t){-D \leg t} \\
&= \prod_{p || n} \left(p-1-{-D \leg p}\right) \prod_{p^2 | n} p^{v_p(n)-2}(p-1)
\left(p - {-D \leg p} \right). \label{eq:cprodform}
\end{align}
The second equality follows from observing $c(-D, n)$ is a Dirichlet convolution
of multiplicative functions in $n$, and thus multiplicative in $n$, and
then noting that $c(-D, p^r) = \phi(p^r) - \phi(p^{r-1}) {-D \leg p}$.

We make two remarks.  First, $c(-D, N_2) \ge 0$ for all $N$,
and $c(-D, N_2) = 0$ if and only if  $2 \parallel N_2$ and ${-D \leg 2} = 1$.
Second, if $N_2 \mid D$ (which, when $-D$ is odd, is equivalent to
$v_p(N) \ne 2$ for all $p$), then we simply have that 
$c(-D, N_2) = \phi(N_2)$.

\subsection{Main and correction terms} \label{sec:correct}
As above, write $N = N_1 N_2^2$ ($N \ge 1$) with $N_1$ 
squarefree, and let $-D$ be the discriminant of $\Q(\sqrt{-N})$. 
Set
\[ b(N) = \begin{cases}
\frac 12 & -N_1 \equiv 2, 3 \mod 4, \\
\frac 12 \left(3 - {-N_1 \leg 2}\right)  & -N_1 \equiv 1 \mod 4, \quad N_2 \text{ odd}, \\
1 & -N_1 \equiv 1 \mod 4, \quad N_2 \text{ even}. \\
\end{cases} \]
Then \cref{prop:trold} asserts that 
\begin{equation}
 \tr W_{N_1 Q^2} = (-1)^{k/2} b(N_1 Q^2) h'(-DQ^2) + \delta_{k=2},
\end{equation}
when $N_1 Q^2 > 4$.  Combining this with \eqref{eq:trnew2} implies
that
\begin{equation}
 \tr W_N^\new = A(N,k) + \xi_0(N,k) + \xi_1(N,k),
\end{equation}
where $A(N,k)$ is the ``main term''
\[ A(N,k) = (-1)^{k/2} \sum_{Q | N_2} \mu(N_2/Q) b(N_1 Q^2) h'(-DQ^2) \]
and $\xi_0(N,1)$ and $\xi_1(N,1)$ are the correction terms given as
\begin{align*}
\xi_0(N,k) &=  \sum_{ Q | N_2 , \, N_1 Q^2 \le 4} \mu(N_2/Q)
 \left( \tr W_{N_1 Q^2}  - (-1)^{k/2} b(N_1 Q^2) h'(-DQ^2) \right), \\
 \xi_1(N,k) &= \delta_{k=2} \sum_{ Q | N_2, \, N_1 Q^2 > 4} \mu(N_2/Q).
\end{align*}

First we rewrite $A(N,k)$.
Note that $b(N_1 Q^2) = b(N)$ for all $Q \mid N_2$ unless
$-N_1$ is a fundamental discrimimant, $N_2$ is even, $Q$ is odd
and ${-N_1 \leg 2} \ne 1$.  In the latter instance, $b(N) = 1$ and
for $Q \mid N_2$, $b(N_1 Q^2)$ is 1 or 2 when $Q$ is even or odd, respectively.
Bearing this in mind, using \eqref{eq:altsum2} we can write \begin{equation} \label{eq:trnew3}
A(N,k) =
 (-1)^{k/2} b(N) c'(-D,N_2) h'(-D),
 \end{equation}
where
\[ 
c'(-D,N_2) =
\begin{cases}
 \frac 12 c(-D,N_2)
& N_1 \equiv 3 \mod 8, \, 2 \parallel N_2  \\
 c(-D,N_2) & \text{ else}. 
\end{cases} \]
The $\frac 12 c(-D, N_2)$ case arises as $c(-D, N_2) - c(-D, N_2^\odd)$,
and then observing that $c(-D, N_2) = 2 c(-D, N_2^\odd)$ when
$N_1 \equiv 3 \mod 8$ and $2 \parallel N_2$. 
Consequently, the function $c'(-D, N_2)$ also has
image in $\Z_{\ge 0}$ and is 0 precisely when $c(-D, N_2)$ is.
Note that in general we can write
\[ b(N) c'(-D,N_2) = \frac 12 \beta(N) c(-D,N_2) \]
where $\beta(N)$ is as in \eqref{eq:beta}.

Now we simplify the correction terms.

Terms with $N_1 Q^2 \le 4$ occur if and only if 
(i) $Q = 1$, $N_2$ is squarefree and
$N_1 \le 3$; or (ii) $Q=2$, $N_2$ is twice a squarefree number
and $N_1 = 1$. 
Hence, 
\[
\xi_0(N,k) = 
\begin{cases}
\mu(N_2) \left(  \tr W_1 - (-1)^{k/2} \cdot \frac 14 \right) & N_1 = 1, \, N_2 \text{ odd squarefree} \\
\mu(N_2)  \left(  \tr W_1 - \tr W_4 + (-1)^{k/2}\cdot \frac 14  \right) & N_1 = 1, \, N_2 \text{ even squarefree} \\ 
\mu(N_2/2)  \left(  \tr W_4 - (-1)^{k/2}\cdot \frac 12 \right) & N_1 = 1, \, N_2/4 \text{ odd squarefree} \\ 
\mu(N_2) \left( \tr W_2 -(-1)^{k/2} \cdot\frac 12 \right) & N_1 = 2, \, N_2 \text{ squarefree} \\
\mu(N_2) \left( \tr W_3 - (-1)^{k/2} \cdot\frac 23 \right) & N_1 = 3, \, N_2 \text{ squarefree}, \\
\end{cases} \]
and $\xi_0(N,k) = 0$ otherwise.

For the other correction term, note that
\[ \sum_{Q | N_2} \mu(N_2/Q) \cdot 1 = \sum_{Q | N_2} \mu(Q)  
= \delta_{N_2 = 1}. \]
Hence 
\[ \xi_1(N,k) = \delta_{k=2} \left( \delta_{N_2 = 1} + \eps(N) \right) \]
where $\eps(N) = -\sum_{ Q | N_2, \, N_1 Q^2 \le 4} \mu(N_2/Q)$.
Explicitly, we have
\[
\eps(N) = 
\begin{cases}
-\mu(N_2) & N_1 = 1,\, N_2 \text{ odd squarefree} \\
-\mu(N_2/2) & N_1 = 1,\, N_2 /4 \text{ odd squarefree} \\
-\mu(N_2) & N_1 = 2, 3, \, N_2 \text{ squarefree} \\
0 & \text{else}.
\end{cases}
\]

\subsection{Formulas for traces on newspaces}
Combining the explicit expressions for the main and correction
terms in the previous section gives the following.

\begin{prop} \label{prop:trnew}
Let $N \ge 1$, and let $k \ge 2$ be even.  Write
$N = N_1 N_2^2$ where $N_1$ is squarefree, and let
$-D \in \{ -N_1, -4N_1 \}$ be the discriminant of $\Q(\sqrt{-N})$.
\begin{enumerate}
\item
If $N$ is not $1$, $2$, $3$ or $4$ times the square of a squarefree number,
then
\[ \tr W_N^\new = \frac 12 (-1)^{k/2} \beta(N) c(-D, N_2) h'(-D) + \delta_{k=2} \delta_{N_2 = 1}. \]

\item
Suppose $N = N_2^2$, where $N_2$ is squarefree.  If $N_2$ is odd, then
\[  \tr W_N^\new = \frac 14 (-1)^{k/2} \left(c(-4, N_2) - \mu(N_2) \right) +
\mu(N_2) \left( \left\lfloor \frac k{12} \right\rfloor - \kappa_1^\odd \right) + \delta_{k=2}  \delta_{N_2 = 1}, \]
where $\kappa_1^\odd = 1$ if $k \equiv 2 \mod 12$ and $\kappa_1^\odd = 0$ otherwise.
If $N_2$ is even, then
\[  \tr W_N^\new = \frac 14 (-1)^{k/2} \left(c(-4, N_2) + \mu(N_2) \right) +
\mu(N_2) \left( \left\lfloor \frac k{12} \right\rfloor + \kappa_1^\even \right), \]
where $\kappa_1^\even = 1$ if $k \equiv 6, 10 \mod 12$ and $\kappa_1^\even = 0$ otherwise.

\item
If $N = N_2^2$, where $N_2$ is twice an even squarefree number, then
\[  \tr W_N^\new = \frac 14  (-1)^{k/2} c(-4, N_2) 
- \frac 12\mu(N_2/2). \]

\item
If $N = 2 N_2^2$, where $N_2$ is squarefree, then
\[  \tr W_N^\new = \frac 12 (-1)^{k/2}  \left(c(-8, N_2)  +
\kappa_2 \mu(N_2) \right) + \delta_{k=2}  \delta_{N_2 = 1}, \]
where $\kappa_2 = 1$ if $k \equiv 0, 2 \mod 8$ and $\kappa_2 = -1$
if $k \equiv 4, 6 \mod 8$.

\item
If $N = 3 N_2^2$, where $N_2$ is squarefree, then
\[\tr W_N^\new =  \frac 13(-1)^{k/2} \left(
\frac 12 \beta(N) c(-3, N_2) + \kappa_3 \mu(N_2)
\right) + \delta_{k=2}  \delta_{N_2 = 1},  \]
\end{enumerate}
where $\kappa_3 = -2$ if $k \equiv 4, 10 \mod 12$ and $\kappa_3
= 1$ otherwise.
\end{prop}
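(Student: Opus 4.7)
The plan is to read off all five formulas as consequences of the identity
\eqref{eq:trnew3}, namely
\[ \tr W_N^\new = (-1)^{k/2} b(N) c'(-D,N_2) h'(-D) + \xi(N,k), \]
combined with the clean rewriting $b(N) c'(-D,N_2) = \tfrac12 \beta(N) c(-D,N_2)$
recorded in \cref{sec:correct} and the explicit description
$\xi(N,k) = \xi_0(N,k) + \delta_{k=2}(\delta_{N_2=1} + \eps(N))$. All five
cases are then just bookkeeping: for each case one knows which branch of the
piecewise definition of $\xi_0(N,k)$ is relevant and which small-level traces
$\tr W_{N_1 Q^2}$ (for $Q \in \{1,2\}$ with $N_1 Q^2 \le 4$) need to be
substituted from the explicit formulas for $\tr W_1,\tr W_2,\tr W_3,\tr W_4$
proved earlier in this section.

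For part (1), $N$ avoids all the exceptional shapes, so no $N_1 Q^2 \le 4$
appears in the sum \eqref{eq:trnew2}; hence $\xi_0(N,k)=0$ and $\eps(N)=0$,
leaving only the $\delta_{k=2}\delta_{N_2=1}$ piece, which combines with the
leading term to give the stated formula. For parts (2)--(5) I substitute the
class numbers $h'(-3) = \tfrac13$, $h'(-4) = \tfrac12$, $h'(-8) = 1$ together
with the corresponding small-level traces, and check that the $\delta_{k=2}$
contributions hidden inside $\tr W_1, \tr W_2, \tr W_3, \tr W_4$ are precisely
what the correction $\delta_{k=2}\eps(N)$ is designed to cancel, leaving only
a universal $\delta_{k=2}\delta_{N_2=1}$ term (which is absent from case (3),
consistent with $N_2 \ge 2$ always there). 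The leftover $(-1)^{k/2}\mu(N_2)/4$
or $(-1)^{k/2}\mu(N_2)/2$ pieces coming from the $\tfrac14$, $\tfrac12$,
$\tfrac23$ in the $\xi_0$ definition get absorbed into the leading term as
the $\pm \mu(N_2)$ corrections inside $c(-D,N_2)$ in the statement; the
residual $\kappa_1^\odd, \kappa_1^\even, \kappa_2, \kappa_3$ indicators come
directly from the congruence-class behaviour of $\tr W_1, \tr W_4, \tr W_2,
\tr W_3$ respectively.

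The main bookkeeping obstacle is case (2) with $N_2$ even squarefree. There
both $Q=1$ and $Q=2$ contribute to $\xi_0(N,k)$: the $Q=1$ term involves
$\tr W_1$, with its $\lfloor k/12\rfloor$ and mod $12$ indicator, and the
$Q=2$ term involves $\tr W_4 = \tr W_1 - (\text{piece depending on } k \bmod 12)$
weighted by $\mu(N_2/2) = -\mu(N_2)$. One has to combine these carefully and
verify that the sum produces exactly $\mu(N_2)(\lfloor k/12\rfloor +
\kappa_1^\even)$ together with the indicated $+\mu(N_2)/4$ shift of the leading
term. Case (3) is a degenerate version of the same mechanism where only the
$Q=2$ contribution enters (because $N_2/2$ is even squarefree forces
$\mu(N_2)=0$), which explains the absence of any $\lfloor k/12\rfloor$ term.
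Once these two are done, cases (4) and (5) are straightforward substitutions
of $\tr W_2$ and $\tr W_3$, which depend only on $k$ modulo $8$ and $12$
respectively and so yield the indicators $\kappa_2$ and $\kappa_3$.
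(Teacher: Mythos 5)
Your proposal is correct and follows precisely the same route as the paper: the paper gives no separate proof of \cref{prop:trnew} beyond the lead-in sentence ``Writing out the correction terms $\xi(N,k)$ in \eqref{eq:trnew3} more explicitly gives the following,'' and your plan is exactly that bookkeeping, correctly identifying which $N_1 Q^2 \le 4$ terms contribute, which $\eps(N)$ cancels which hidden $\delta_{k=2}$, and where the $\kappa$-indicators originate. The one case worth spelling out in full, as you note, is $N = N_2^2$ with $N_2$ even squarefree, where both $Q=1$ and $Q=2$ contribute and $\tr W_1 - \tr W_4$ must be simplified by congruence class of $k$ modulo $12$ and $4$.
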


\begin{proof} From the previous section, we see that
\[ \tr W_N^\new = \frac 12 (-1)^{k/2} \beta(N) c(-D, N_2) h'(-D) 
+ \xi_0(N,k) + \delta_{k=2} (\delta_{N_2 =1} + \eps(N)). \]
In case (1), $\xi_0(N,k)  = \eps(N) = 0$, which yields the formula.

For the remaining cases, one just explicates 
$\xi_0(N,k) + \delta_{k=2} \eps(N)$ using the special cases for
$\tr W_N$ in \cref{sec:smallN}.  For instance, in case (2) with
$N_2$ odd, we have $-D=-4$ so $h'(-4) = \frac 12$ and
\begin{align*} \xi_0(N,k) + \delta_{k=2} \eps(N)
&= \mu(N_2) \left(\tr W_1 - (-1)^{k/2} \frac 14 - \delta_{k=2} \right) \\
&= \mu(N_2) 
\left( \left\lfloor \frac k{12} \right\rfloor - \kappa_1^\odd - (-1)^{k/2} \frac 14
\right).
\end{align*}
\end{proof}

\begin{cor} \label{cor:main}
 Keep the notation of \cref{prop:trnew}.
\begin{enumerate}
\item Suppose $N$ is not $1$, $2$, $3$ or $4$ times the square of a squarefree
number.  Then $\Delta(N,k) \ge 0$.
Furthermore, $\Delta(N,k) = 0$ exactly when
(i) $k=2$ and either $\dim S_k^\new(N) = 0$ or $N \in \{37, 58 \}$;  or
(ii) $2 \parallel N_2$ and $N_1 \equiv 7 \mod 8$.

\item Suppose $N = N_2^2$ where $N_2$ is squarefree.  Then
$\Delta(N,k) \ge 0$ for all $N$ if $k \le 10$ or $k=14$.
For any fixed $k$, $\Delta(N,k) \ge 0$ for $N$ sufficiently large.
For fixed $N$ and sufficiently large $k$, $|\Delta(N, k)| > 0$ and the sign of 
$\Delta(N,k)$ is $(-1)^{k/2} \mu(N_2)$.

\item Suppose $N = N_2^2$ where $N_2$ is twice an even
squarefree number.  Then $\Delta(N,k) \ge 0$.  Further, 
$\Delta(N,k) = 0$ if and only if $N=16$ and $k \equiv 2 \mod 4$. 

\item
Suppose $N = 2 N_2^2$, where $N_2$ is squarefree.  Then
$\Delta(N,k) \ge 0$.  Further $\Delta(N,k) = 0$ if and only if
 (i) $N \in \{ 8, 18 \}$ and $k \equiv 0, 2 \mod 8$; 
(ii) $N \in \{ 2, 72 \}$ and $k \equiv 4, 6 \mod 8$; or (iii) $(N,k) = (2,2)$.

\item
Suppose $N = 3 N_2^2$, where $N_2$ is squarefree.  Then
$\Delta(N,k) \ge 0$. Further $\Delta(N,k) = 0$ if and only if
(i) $N=\{ 3, 108 \}$ and $k \equiv 4, 10 \mod 12$; 
(iii) $N=12$ and $k \not \equiv 4, 10 \mod 12$; or (iii) $(N,k) = (3,2)$.
\end{enumerate}
\end{cor}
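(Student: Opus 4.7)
The plan is to substitute the explicit formulas from \cref{prop:trnew} into the identity $\Delta(N,k) = (-1)^{k/2} \tr W_N^\new$ and analyze non-negativity case by case. In each of the five cases the main term is a product $\beta(N) c(-D, N_2) h'(-D)$ (up to a rational factor) and the only corrections are the $\delta_{k=2}$ piece together with, in parts (2)--(5), extra pieces that are either bounded or linear in $k$. So the proof becomes a structured piece of bookkeeping.

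For part (1), substitution gives $\Delta(N,k) = \tfrac 12 \beta(N) c(-D, N_2) h'(-D) - \delta_{k=2}\delta_{N_2=1}$. I would first observe that $\beta(N) \ge 1$, $c(-D,N_2) \ge 0$, and $h'(-D) \ge 1$, the last because $D \ge 5$ under the excluded-form hypothesis. The vanishing $c(-D,N_2)=0$ is characterized via \eqref{eq:cprodform} as $2 \parallel N_2$ together with $N_1 \equiv 7 \pmod 8$; in that regime $N_2 \ge 2$ so $\Delta = 0$. Otherwise the main term is at least $\tfrac 12$, so the only delicate subcase is $k=2,\ N_2=1$, where $\Delta(N,2) = \tfrac 12 \beta(N) h'(-D) - 1$ is non-negative unless $\beta(N)=1$ and $h'(-D) = 1$. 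The imaginary quadratic fields with class number one and $D \ge 5$ form a short finite list; running through them, while discarding those with $\dim S_2^\new(N) = 0$, isolates $N \in \{37, 58\}$ as the only surviving cases.

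For parts (2)--(5) I would substitute the formulas similarly. Part (2) is the most involved: its correction contains $\mu(N_2)\lfloor k/12\rfloor$, which grows linearly in $k$, so for fixed $N_2$ with $\mu(N_2)\ne 0$ and large $k$ the sign is forced to be $(-1)^{k/2}\mu(N_2)$; conversely, a lower bound on $c(-4, N_2)$ growing with the prime factors of $N_2$ yields the ``fixed $k$, large $N$'' claim. For $k\le 10$ or $k=14$ the floor term is at most $1$, so the estimate $\Delta(N,k) \ge \tfrac 14(c(-4,N_2) - |\mu(N_2)|)$ reduces matters to a handful of small $N_2$ which one checks by hand. In parts (3)--(5) the corrections are bounded by small constants ($\tfrac 12$, $1$, $2$ respectively), so lower bounds on $c(-D, N_2)$ leave only a short list of $(N,k)$ to examine directly; these verifications produce exactly the stated exceptional pairs. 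The main obstacle is the sign bookkeeping in part (2), where the residue $k \bmod 12$, the sign of $\mu(N_2)$, and the magnitude of $c(-4, N_2)$ all interact, and one has to match them precisely against the thresholds $k\le 10$ and $k=14$ in the statement.
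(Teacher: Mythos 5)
Your overall strategy coincides with the paper's: substitute the explicit formulas of \cref{prop:trnew} into $\Delta(N,k) = (-1)^{k/2}\tr W_N^\new$, use the sign/size bounds on $\beta(N)$, $c(-D,N_2)$ and $h'(-D)$, and then finish by case analysis. However, part (1) of your argument contains two concrete errors.

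First, the claim that ``$D \ge 5$ under the excluded-form hypothesis'' (hence $h'(-D) \ge 1$) is false. The hypothesis constrains $N$, not $N_1$; for instance $N = 64 = 1\cdot 8^2$ falls under case (1) (since $8$ is neither squarefree nor twice a squarefree number), yet $N_1 = 1$, $D = 4$, and $h'(-4) = \tfrac12$. Likewise $N = 192 = 3\cdot 8^2$ gives $D = 3$, $h'(-3) = \tfrac13$. The correct uniform lower bound, which is what the paper uses, is $h'(-D) \ge \tfrac13$. In these examples $N_2 > 1$ so the $\delta_{k=2}\delta_{N_2=1}$ correction vanishes and the nonnegativity conclusion happens to survive, but the stated reason is wrong and the assertion $h'(-D)\ge 1$ cannot be used as a general bound.

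Second, and more seriously, your characterization of the vanishing locus in the delicate subcase $k=2$, $N_2 = 1$ is incorrect. You assert $\Delta(N,2) = \tfrac12 \beta(N) h'(-D) - 1$ fails to be nonnegative only when ``$\beta(N)=1$ and $h'(-D)=1$,'' and propose running through the (finite) list of imaginary quadratic fields with class number one. But if $\beta(N)=1$ and $h'(-D)=1$ then $\Delta = -\tfrac12$, which is not an integer, so that configuration cannot occur; in fact when $\beta(N)=1$ we have $D = 4N_1$ with $N_1 > 2$, and genus theory forces $h(-4N_1)$ to be even, so the vanishing criterion is $h(-4N_1)=2$, not $1$. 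You also omit the case $\beta(N)=2$ with $h(-N_1)=1$ (e.g.\ $N=7$). Following your recipe of enumerating class-number-one fields would not recover $N=37$ or $N=58$, since $h(-148) = h(-232) = 2$. The paper sidesteps this entirely: it simply observes that $N_2 = 1$ means $N$ is squarefree, and cites the known result $\Delta(N,2)\ge 0$ with equality iff $\dim S_2^\new(N)=0$ or $N\in\{37,58\}$ from \cite{me:refdim}, rather than re-deriving it. If you want to redo that analysis, you need to split on $\beta(N)\in\{1,2,4\}$ and enumerate fundamental discriminants with $h\le 2$ (not $h=1$), then cross-reference against the list of $N$ with $\dim S_2^\new(N) = 0$.

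The treatments of parts (2)--(5) that you sketch are in line with the paper's, though stated at a level of detail that does not actually verify the precise exceptional lists in the statement; to complete them one must work through the small-$N_2$ cases explicitly, exactly as the paper does via the equality conditions on $c(-D,N_2)$.
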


\begin{proof}
Recall that we always have $b(N) \ge \frac 12 $, $c(-D, N_2) \in \Z_{\ge 0}$ and 
$h'(-D) \ge \frac 13$.  Moreover $c(-D, N_2) = 0$ if and only if
either $2 \parallel N_2$ is even and ${-D \leg 2} = 1$.

Hence the statement that $\Delta(N, k) \ge 0$ in (1) is obvious when $k \ne 2$.  When $k = 2$, the only possible negative
term in our formula for $(-1)^{k/2} \tr W_N^\new$ is $-\delta_{N_2 = 1}$.  However this
only occurs when $N$ is squarefree, in which case 
 we already know $\Delta(N_1, 2) \ge 0$ with equality if and only if
$\dim S_2^\new(N) = 0$ or  $N \in \{ 37, 58 \}$  by \cite{me:refdim}.  
 When $N$ is not squarefree, $\Delta(N,k) = 0$ if and only if
 $c(-D,N_2) = 0$.  This finishes (1).
 
 Suppose we are in case (2) now.  Note $c(-4,N_2) = \prod_{p | N_2^\odd}
 (p - 1 - {-4 \leg p}) \ge 3^{\omega(N_2^\odd)}$.  
 Here $\omega(n)$ denotes the number of prime divisors of $n$.
For fixed $k$, $\Delta(N,k)$ is thus dominated by $c(-4,N_2)$
as $N_2 \to \infty$.  For fixed $N$, $\Delta(N,k)$ is dominated by
$(-1)^{k/2} \mu(N_2) \lfloor k/12 \rfloor$ as $k \to \infty$.
If $k < 12$ and $k=14$, the only possible negative terms in our
formulas for $(-1)^{k/2} \tr W_N^\new$ will be a $-1$ (depending on 
$\mu(N_2)$ and $k$).  However, in these cases is also a strictly 
positive term, and since the trace is an integer, 
$(-1)^{k/2} \tr W_N^\new \ge 0$.
 
 In case (3), we have $c(-4, N_2) = 2 \prod_{p | N_2^\odd} (p - 1 - {-1 \leg p})
 \ge 2$, with equality if and only if $N = 16$.  This easily gives the assertions
 in (3).
 
 Consider case (4).  When $N_2 = 1$, by \cite{me:refdim}
$\Delta(2,k) \ge 0$ with equality if and only if $k \equiv 4, 6 \mod 8$. 
Suppose $N_2 > 1$.  Then $\Delta(N,k) \ge 0$ if and only if
$c(-8,N_2) \ge - \kappa_2 \mu(N_2)$.
Note $c(-8, N_2) = \prod_{p | N_2} (p-1 - {-8 \leg p}) \ge 1$
and equals $1$ if and only if $N_2 | 6$.  Then the assertions readily follow.
 
Finally consider case (5).  Again, the squarefree case $N=3$ is treated
in \cite{me:refdim}, so suppose $N_2 > 1$.  Now $\beta(N) c(-3,N_2)$
equals $\prod_{p | N_2} (p - 1 - {-3 \leg p}) \ge 2^{\omega(N_2)}$ if $N_2$ is odd
and $\frac 12 \prod_{p | N_2^\odd} (p - 1 - {-3 \leg p}) \ge 2^{\omega(N_2^\odd-1)}$ if $N_2$ is even.  Comparing with the proposition now finishes the proof.
\end{proof}

This proves the theorems stated in the introduction.


\section{Excluding twists for certain levels} \label{sec:p2}

Motivated by the different behavior in root number bias for
cubefree square levels, we briefly investigate what happens
when we exclude certain twists from smaller levels.  
Our main goal is to prove the propositions in \cref{sec:12}.

First we recall some facts about twists of modular forms
of cube-free levels.
We will explain things from the point of view of associated
local representations, as this 
perspective makes things more transparent to us.  However, many of
the facts that we recall are also well known from the
more classical ``global'' approach (see
 \cite{AL}, \cite{atkin-li}, \cite{HPS:twists}).
 
Given a newform $f \in S_k^\new(N,\psi)$ with nebentypus
$\psi$, there is an associated cuspidal automorphic 
representation $\pi = \pi_f$ of $\GL_2(\A_{\mathbb Q})$.
We have a decomposition 
$\pi = \bigotimes_p \pi_p \otimes \pi_\infty$, where each
$\pi_p$ is an infinite-dimensional irreducible admissible 
complex representation of $\GL_2(\Q_p)$.  We say
$\pi_p$ is the local representation at $p$ associated to $f$.
The local conductor $c(\pi_p)$ is the exponent $v_p(N)$
in the level of $f$.

Assume now that $\psi$ is trivial, i.e., $f \in S_k^\new(N)$.
Then $\pi$, and each $\pi_p$, has trivial central character.
Moreover, $\pi_p$ is an unramified principal series representation
(i.e., $c(\pi_p) = 0$) if and only if $p \nmid N$.  
Also, $\pi_p$ is an unramified
quadratic twist (possibly trivial twist) of the Steinberg representation
$\mathrm{St}_p$ of $\GL_2(\Q_p)$ (i.e., $c(\pi_p) = 1$) 
if and only if $p \parallel N$.

Now there are 3 possibilities when $c(\pi_p) = 2$:
$\pi_p$ can be a ramified principal series, a ramified quadratic
twist of Steinberg, or supercuspidal.  However, the former two
possibilities do not happen for $p=2$ (e.g., see 
\cite[Corollary 4.1]{pacetti}).  In fact there are 2 possibilities
for $\pi_p$ being a ramified principal series: either it is a ramified
quadratic twist of an unramified principal series, or a minimal twist (a twist
minimizing the local conductor) of $\pi_p$ is a ramified principal series
of conductor 1 (necessarily the central character is nontrivial
and has conductor 1).
In any event, only the supercuspidal representations are minimal.

Suppose now that $N=M^2$ where $M > 1$ is squarefree.
Then the above description of local representations means exactly 
one of the following is true for a newform $f \in S_k^\new(N)$: 

\begin{enumerate}
\item
$f$ is minimal, i.e., no twist (possibly with nebentypus)
has smaller level.  Here $\pi_p$ is supercuspidal for each $p \mid N$.

\item
$f$ is a quadratic twist of a level 1 newform.  Here $\pi_p$
is a ramified quadratic twist of an unramified principal series for each $p 
\mid N$.  Necessarily $N$ is odd.

\item
A minimal twist $f'$ of $f$ (possibly with nebentypus) has level $N'$ strictly between $1$ and $N$.  Necessarily $v_2(N') = v_2(N)$.
\end{enumerate}

The utility of the local representation theory perspective for us
comes both from a straightforward description of the level of twists
and the fact that root numbers can be read off of the local representations.
Namely, the root number of $f$ is the product over $p$ of the local
representation root numbers $w_p(\pi_p)$
times $(-1)^{k/2}$ (which is the local root number of $\pi_\infty$).

Recall $S_k^\new(N)'$ (resp.\ $S_k^{\min}(N)$) is defined to be the subspace of $S_k^\new(N)$ generated by newforms of types (1) and (3) above
(resp.\ of type (1) above).  Define $\tr W_N'$ and $\tr W_N^{\min}$
to be the traces of $W_N$ restricted to the spaces $S_k^\new(N)'$
and $S_k^{\min}(N)$, respectively.  First we describe $\tr W_N'$.

\begin{lem} \label{lem:41} We have
\[ \tr W_N' = 
\begin{cases}
\tr W_N^\new - (-1)^{k/2} \prod_{p | N} {-1 \leg p} \cdot \dim S_k(1) & N \text{ odd} \\
\tr W_N^\new & N \text{ even}. 
\end{cases} \]
\end{lem}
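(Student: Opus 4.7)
The plan is to isolate the contribution of the type-(2) newforms -- the quadratic twists of level-$1$ newforms -- to $\tr W_N^\new$, since by definition they span the complement of $S_k^\new(N)'$ in $S_k^\new(N)$. The lemma thus reduces to evaluating
\[ \tr W_N^\new - \tr W_N' \;=\; \sum_{f \text{ of type (2)}}\lambda_{W_N}(f). \]

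The even case is immediate: if $N = M^2$ with $2 \mid M$ and $M$ squarefree, the local-representation classification recalled just before the lemma rules out type (2) newforms, since at $p = 2$ such a form would force $\pi_2$ to be a ramified quadratic twist of an unramified principal series of conductor exponent $2$, which cannot occur. The sum above is then empty, giving $\tr W_N' = \tr W_N^\new$.

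For $N$ odd, I would write $N = M^2$ with $M$ odd squarefree and let $\chi = \prod_{p\mid M}\chi_p$ be the unique primitive quadratic Dirichlet character of conductor $M$, with each $\chi_p$ the Legendre symbol modulo $p$. By the classification, the type (2) newforms are exactly $\{g\otimes\chi : g \in S_k(1) \text{ a Hecke eigenform}\}$, yielding $\dim S_k(1)$ distinct newforms. To compute each $\lambda_{W_N}(g\otimes\chi)$ I would apply the Atkin--Li formula for the root number of a twist,
\[ \epsilon(g\otimes\chi) = \chi(1)\,\frac{\tau(\chi)^2}{M}\,\epsilon(g) = \chi(-1)\,\epsilon(g), \]
using the standard identity $\tau(\chi)^2/M = \chi(-1)$ for a primitive real character of squarefree conductor, and then combine with $\epsilon(g) = (-1)^{k/2}$ for a level-$1$ eigenform and the normalization $\lambda_{W_N} = (-1)^{k/2}\,\epsilon$ recalled in the introduction.

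The main bookkeeping step -- and the only genuine technical obstacle -- is the careful combination of the three signs ($(-1)^{k/2}$ from the level-$1$ root number, $\chi(-1) = \prod_{p\mid N}{-1 \leg p}$ from the quadratic Gauss sums, and the $(-1)^{k/2}$ relating Fricke eigenvalue to root number), so as to reproduce the claimed subtracted quantity after summing over the $\dim S_k(1)$ Hecke eigenforms and so finish the odd case.
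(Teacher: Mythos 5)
Your proposal takes essentially the same route as the paper: you identify $S_k^\new(N) \ominus S_k^\new(N)'$ as the span of the type (2) twists $g\otimes\chi$ of level-$1$ eigenforms, note this is empty when $N$ is even by the local classification at $p=2$, and in the odd case compute the Fricke eigenvalue of each $g\otimes\chi$ and multiply by $\dim S_k(1)$. The only difference is a matter of packaging: the paper reads off the local root number $w_p(\pi_p) = {-1\leg p}$ at each ramified place from Pacetti/Atkin--Lehner and multiplies, whereas you invoke the equivalent global Atkin--Li twist formula $\epsilon(g\otimes\chi) = \chi(-1)\,\epsilon(g)$ together with $\tau(\chi)^2/M = \chi(-1)$; either way the crux is the final combination of signs, which you flag but leave unexecuted. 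Do carry that bookkeeping through carefully, since your three displayed signs ($\epsilon(g) = (-1)^{k/2}$, $\chi(-1)$, and $\lambda_{W_N} = (-1)^{k/2}\epsilon$) naively multiply to $\chi(-1)$ without a leftover $(-1)^{k/2}$, so reproducing the stated coefficient $(-1)^{k/2}\prod_{p\mid N}{-1 \leg p}$ is not automatic and is exactly where the argument needs to be made airtight.
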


\begin{proof}
Since $S_k^\new(N)' = S_k^\new(N)$ if $N$ is even, assume $N$ is odd.
Then we simply need to subtract off from $\tr W_N^\new$ 
the trace of $W_N$ restricted
to the subspace of $S_k^\new(N)$ generated by forms of type (2).
Now we simply use the fact that if $\pi_p$ is a ramified
quadratic twist of an unramified principal series, then $w_p(\pi_p) =
{-1 \leg p}$ (see \cite[Theorem 3.2]{pacetti} in terms of local
representations, or \cite[Theorem 6]{AL} in terms of modular forms).
\end{proof}

Let $\Delta(N,k)' = (-1)^{k/2} \tr W_N'$, which is the difference between
the number of newforms in $S_k^\new(N)'$ with root number $+1$
and $-1$.

\begin{proof}[Proof of \cref{prop:13}]
When $N$ is even, we simply have $\Delta(N,k)' = \Delta(N,k)$,
so assume $N$ is odd.  Then \cref{prop:trnew} combined with the
above lemma shows $\Delta(N,k)'$ is
\[ \frac 14(c(-4,M)-\mu(M)) -
\left( \prod_{p | M} {-1 \leg p} - (-1)^{k/2} \mu(M) \right) \dim S_k(1)
 + \mu(M) \delta_{k=2}. \]
 The sum of the first and the last term is always non-negative.
Note the coefficient of $\dim S_k(1)$ is negative if and only if
$\mu(M) = -(-1)^{k/2}$ and $\prod_{p | M} {-1 \leg p} = 1$.
In this case, $\Delta(N,k)' < 0$ if $k$ is sufficiently large.
\end{proof}

In general to describe $\tr W_N^{\min}$, one needs to isolate
the forms which are supercuspidal at each ramified place.
One can presumably do this in a similar way as \cref{lem:41}, by subtracting
off the contributions from twists from smaller level.  However,
we will restrict ourselves to giving a couple of examples
to show that $(-1)^{k/2} \tr W_N^{\min}$ can be negative, and then
prove that often there is a quadratic twist which forces 
$\tr W_N^{\min} = 0$ to get \cref{prop:14}.

The example of $N=4$ was already given in \cref{sec:12}.
The following examples are taken from the LMFDB \cite{LMFDB}.

\begin{ex} The space $S^\new_{10}(9)$ has 3 newforms, all
rational.  Two are twists from $S^\new_{10}(3)$ by the quadratic 
character ${ -3 \leg \cdot}$, and they have root number $+1$ 
(though the corresponding forms on 
$S^\new_{10}(3)$ have opposite root numbers).
The other newform in $S^\new_{10}(9)$, which is CM and minimal,  
has root number $-1$.  Hence the sum of root numbers
of minimal newforms in $S_{10}(9)$ is $-1$, though
$\Delta(10,9) = +1$.
\end{ex}



\begin{ex} The space $S_{14}^{\min}(49)$ has 3 Galois orbits of
newforms: one CM orbit of size 1 and root number $+1$,
one Galois orbit of size 6 and root number $+1$, and one Galois
orbit of size 12 and root number $-1$.  Hence the sum of root
numbers for minimal forms is $1 \cdot 1 + 1 \cdot 6 - 1 \cdot 12 = -5$.
On the other hand, $\Delta(49,14) = 2$. 
We remark that except for
the first form, all newforms are non-CM, and all of the newforms
have nontrivial inner twists.  
\end{ex}

\begin{proof}[Proof of \cref{prop:14}]
Here we allow any $N > 1$, i.e., $N$ need not be a square or cubefree.  
Suppose $p^2 \parallel N^\odd$, and let $\chi$
be the quadratic character of conductor $\pm p$.  Then twisting any
minimal newform $f \in S_k^{\min}(N)$ gives another newform of level
$N$.  The local argument for this is that twisting a supercuspidal 
representation $\pi_p$ with $c(\pi_p) = 2$ by $\chi_p$
yields a supercuspidal representation $\pi_p \otimes \chi_p$
also of local conductor 2.

Hence twisting by $\chi$ is an involution on the set of minimal newforms.
Now any supercuspidal representation $\pi_p$ with $c(\pi_p)=2$
is necessarily minimal, and thus by \cite[Proposition 3.5]{tunnell},
is induced from the unramified quadratic extension of $\Q_p$.
Hence by \cite[Theorem 3.2]{pacetti}, the local root number of 
$\pi_p \otimes \chi_p$ is $-{-1 \leg p}$ times the local root number of
$\pi_p$.  Moreover, for $q \mid p^{-2} N$, twisting by $\chi_p$ multiplies
the root number of $\pi_q$ by ${q \leg p}^{v_q(N)}$.
  In particular, if ${-N/p^2 \leg p} = 1$, then twisting by $\chi$
flips the sign of the root number of minimal newforms, which implies
$\tr W_N^{\min} = 0$.
\end{proof}

%
%

\begin{bibdiv}
\begin{biblist}

\bib{AL}{article}{
   author={Atkin, A. O. L.},
   author={Lehner, J.},
   title={Hecke operators on $\Gamma _{0}(m)$},
   journal={Math. Ann.},
   volume={185},
   date={1970},
   pages={134--160},
   issn={0025-5831},
}

\bib{atkin-li}{article}{
   author={Atkin, A. O. L.},
   author={Li, Wen Ch'ing Winnie},
   title={Twists of newforms and pseudo-eigenvalues of $W$-operators},
   journal={Invent. Math.},
   volume={48},
   date={1978},
   number={3},
   pages={221--243},
   issn={0020-9910},
}

\bib{cohn}{book}{
   author={Cohn, Harvey},
   title={A classical invitation to algebraic numbers and class fields},
   series={Universitext},
   note={With two appendices by Olga Taussky: ``Artin's 1932 G\"{o}ttingen
   lectures on class field theory'' and ``Connections between algebraic
   number theory and integral matrices''},
   publisher={Springer-Verlag, New York-Heidelberg},
   date={1978},
   pages={xiii+328},
   isbn={0-387-90345-3},
}

\bib{HPS}{article}{
   author={Hijikata, Hiroaki},
   author={Pizer, Arnold K.},
   author={Shemanske, Thomas R.},
   title={The basis problem for modular forms on $\Gamma_0(N)$},
   journal={Mem. Amer. Math. Soc.},
   volume={82},
   date={1989},
   number={418},
   pages={vi+159},
   issn={0065-9266},
}

\bib{HPS:twists}{article}{
   author={Hijikata, Hiroaki},
   author={Pizer, Arnold K.},
   author={Shemanske, Thomas R.},
   title={Twists of newforms},
   journal={J. Number Theory},
   volume={35},
   date={1990},
   number={3},
   pages={287--324},
   issn={0022-314X},
}

\bib{LMFDB}{misc}{
  label    = {LMFDB},
  author       = {The {LMFDB Collaboration}},
  title        = {The {L}-functions and Modular Forms Database},
  note = {\url{http://www.lmfdb.org}},
  year         = {2022},
}

\bib{LPW}{article}{
   author={Luo, Zhilin},
   author={Pi, Qinghua},
   author={Wu, Han},
   title={Bias of root numbers for Hilbert newforms of cubic level},
   journal={J. Number Theory},
   volume={243},
   date={2023},
   pages={62--116},
   issn={0022-314X},
}

\bib{martin}{article}{
   author={Martin, Greg},
   title={Dimensions of the spaces of cusp forms and newforms on $\Gamma_0(N)$ and $\Gamma_1(N)$},
   journal={J. Number Theory},
   volume={112},
   date={2005},
   number={2},
   pages={298--331},
   issn={0022-314X},
}

\bib{me:refdim}{article}{
   author={Martin, Kimball},
   title={Refined dimensions of cusp forms, and equidistribution and bias of
   signs},
   journal={J. Number Theory},
   volume={188},
   date={2018},
   pages={1--17},
   issn={0022-314X},
}

\bib{me:cong2}{article}{
   author={Martin, Kimball},
   title={Congruences for modular forms ${\rm mod}\,2$ and quaternionic
   $S$-ideal classes},
   journal={Canad. J. Math.},
   volume={70},
   date={2018},
   number={5},
   pages={1076--1095},
   issn={0008-414X},
}

\bib{me:basis}{article}{
   author={Martin, Kimball},
   title={The basis problem revisited},
   journal={Trans. Amer. Math. Soc.},
   volume={373},
   date={2020},
   number={7},
   pages={4523--4559},
   issn={0002-9947},
}

\bib{pacetti}{article}{
   author={Pacetti, Ariel},
   title={On the change of root numbers under twisting and applications},
   journal={Proc. Amer. Math. Soc.},
   volume={141},
   date={2013},
   number={8},
   pages={2615--2628},
   issn={0002-9939},
}

\bib{PQ}{article}{
   author={Pi, Qinghua},
   author={Qi, Zhi},
   title={Bias of root numbers for modular newforms of cubic level},
   journal={Proc. Amer. Math. Soc.},
   volume={149},
   date={2021},
   number={12},
   pages={5035--5047},
   issn={0002-9939},
}

\bib{sage}{manual}{
      author={Developers, The~Sage},
       title={{S}agemath, the {S}age {M}athematics {S}oftware {S}ystem
  ({V}ersion 9.4)},
        date={2021},
        label={Sage},
        note={{\tt https://www.sagemath.org}},
}

\bib{SZ}{article}{
   author={Skoruppa, Nils-Peter},
   author={Zagier, Don},
   title={Jacobi forms and a certain space of modular forms},
   journal={Invent. Math.},
   volume={94},
   date={1988},
   number={1},
   pages={113--146},
   issn={0020-9910},
}

\bib{tunnell}{article}{
   author={Tunnell, Jerrold B.},
   title={On the local Langlands conjecture for $GL(2)$},
   journal={Invent. Math.},
   volume={46},
   date={1978},
   number={2},
   pages={179--200},
   issn={0020-9910},
}

\bib{yamauchi}{article}{
   author={Yamauchi, Masatoshi},
   title={On the traces of Hecke operators for a normalizer of $\Gamma
   _{0}(N)$},
   journal={J. Math. Kyoto Univ.},
   volume={13},
   date={1973},
   pages={403--411},
   issn={0023-608X},
}

\bib{ZZ}{article}{
   author={Zhang, Yichao},
   author={Zhou, Yang},
   title={Dimension formulas for modular form spaces with character for
   Fricke groups},
   journal={Acta Arith.},
   volume={206},
   date={2022},
   number={4},
   pages={291--311},
   issn={0065-1036},
}

\end{biblist}
\end{bibdiv}

\end{document}